\documentclass[final,onefignum,onetabnum]{siamart250211}

\usepackage{amsfonts,amsmath, amssymb, mathrsfs}
\usepackage{graphicx}
\usepackage{epstopdf}
\usepackage{algorithmic}
\usepackage[T1]{fontenc} % to make accended words (such as Fr\'echet) searchable in the PDF
\usepackage{lmodern} % to use Latin Modern instead of Computer Modern font, for smoother text rendering (to be checked)

\Crefname{ALC@unique}{Line}{Lines}
\ifpdf
  \DeclareGraphicsExtensions{.eps,.pdf,.png,.jpg}
\else
  \DeclareGraphicsExtensions{.eps}
\fi

\usepackage{empheq}
\usepackage{cases}
\usepackage{adjustbox} 
\usepackage{subcaption}
\usepackage{amsopn}
\usepackage{enumitem}
\usepackage{marginnote}

\newsiamremark{assumption}{Assumption}
\newsiamremark{remark}{Remark}
\newsiamremark{hypothesis}{Hypothesis}
\newsiamthm{claim}{Claim}
\newsiamthm{example}{Example}
\crefname{definition}{Definition}{Definitions}
\Crefname{definition}{Definition}{Definitions}
\crefname{assumption}{Assumption}{Assumptions}
\crefname{hypothesis}{Hypothesis}{Hypotheses}
\crefname{remark}{Remark}{Remarks}
\Crefname{remark}{Remark}{Remarks}
\crefname{example}{Example}{Examples}

\usepackage{standalone}
\usepackage{tikz}
\usepackage{pgfplots}
\usetikzlibrary{shapes.geometric, calc, intersections, arrows.meta, backgrounds, angles, quotes}
\usepackage{xcolor}

\definecolor{darkgreen}{rgb}{0,0.6,0.1}
\definecolor{darkblue}{rgb}{0,0,0.7}
\definecolor{mygreen}{rgb}{0.3,0.8,0}
\definecolor{myblue}{rgb}{0,0.5,1}
\definecolor{myred}{rgb}{1,0,0}
\definecolor{myorange}{rgb}{1,0.7,0.2}
\definecolor{mypink}{rgb}{0.8,0.2,0.6}
\newcommand{\N}{\mathrm{N}}
\newcommand{\R}{\mathbb{R}}
\newcommand{\T}{\mathrm{T}}

\newcommand{\subjectto}{\mathrm{s.\,t.}}

\newcommand{\St}{\mathrm{St}}
\newcommand{\StXTX}{\mathrm{St}_{X^\top X}}

\newcommand{\ip}[2]{\left\langle #1,#2\right\rangle}
\newcommand{\tr}{\mathrm{tr}}
\newcommand{\norm}[1]{\left\lVert #1\right\rVert}
\newcommand{\normF}[1]{\left\lVert #1\right\rVert_{\mathrm{F}}}

\newcommand{\frob}{\mathrm{F}}

\DeclareMathOperator{\grad}{grad}
\newcommand{\myskew}{\mathrm{skew}}
\DeclareMathOperator{\Hess}{Hess}
\DeclareMathOperator{\sym}{sym}

\DeclareMathOperator{\D}{D}

\DeclareMathOperator{\Id}{Id}

\headers{A second-order method on the Stiefel manifold via Newton--Schulz}{X. Xiong, B. Gao, and P.-A. Absil}

\title{A second-order method landing on the Stiefel manifold via Newton--Schulz iteration\thanks{Submitted to the editors DATE.
\funding{This work was supported by the National Key R\&D Program of China (grant 2023YFA1009300). BG was supported by the National Natural Science Foundation of China (grant No.~12288201). PA was supported by the Fonds de la Recherche Scientifique -- FNRS under Grant no T.0001.23.}}
}

\author{
Xinhui Xiong\thanks{State Key Laboratory of Mathematical Sciences, Academy of Mathematics and Systems Science, Chinese  Academy of Sciences, and University of Chinese Academy of Sciences, China (xiongxinhui@amss.ac.cn).}
\and Bin Gao\thanks{State Key Laboratory of Mathematical Sciences, Academy of Mathematics and Systems Science, Chinese  Academy of Sciences, China (gaobin@lsec.cc.ac.cn).}
\and P.-A. Absil\thanks{ICTEAM Institute, UCLouvain, Belgium
(pa.absil@uclouvain.be).}
}

\begin{document}

\maketitle

% REQUIRED
\begin{abstract}
Retraction-free approaches offer attractive low-cost alternatives to Riemannian methods on the Stiefel manifold, but they are often first-order, which may limit the efficiency under high-accuracy requirements. To this end, we propose a second-order method landing on the Stiefel manifold without invoking retractions, which is proved to enjoy local quadratic (or superlinear for its inexact variant) convergence. The update consists of the sum of (i) a component tangent to the level set of the constraint-defining function that aims to reduce the objective and (ii) a component normal to the same level set that reduces the infeasibility. Specifically, we construct the normal component via Newton--Schulz, a~fixed-point iteration for orthogonalization. Moreover, we establish a geometric connection between the Newton--Schulz iteration and Stiefel manifolds, in which Newton--Schulz moves along the normal space. For the tangent component, we formulate a modified Newton equation that incorporates Newton--Schulz. Numerical experiments on the orthogonal Procrustes problem, principal component analysis, and real-data independent component analysis illustrate that the proposed method performs better than the existing methods. 
\end{abstract}

% REQUIRED
\begin{keywords}
Second-order method, Newton--Schulz iteration, orthogonalization, Stiefel manifold, Riemannian optimization
\end{keywords}

% REQUIRED
\begin{MSCcodes}
65K05, 90C30, 49M15, 65F25
\end{MSCcodes}

\section{Introduction}
Consider the optimization problem
\begin{equation}
\label{eq:problem}
    \begin{aligned}
        \min\ &\ \ \ f(X) \\
        \subjectto\ &\quad X\in\St(p,n):=\{X\in\mathbb{R}^{n\times p}\mid X^\top X=I_p\},        
    \end{aligned}
\end{equation}
where $I_p$ is the $p\times p$ identity matrix, $p\leq n$, $\St(p,n)$ is the \textit{Stiefel manifold}~\cite{edelman1998geometry}, and the objective function $f$ is twice continuously differentiable. Problem~\eqref{eq:problem} arises in numerous applications, including principal component analysis (PCA)~\cite{hotelling1933analysis}, canonical correlation analysis (CCA)~\cite{hotelling1936relations}, independent component analysis (ICA)~\cite{HyvarinenICA2000}, spectral clustering~\cite{ngJordanWeissSpectralClustering2001},
electronic structure calculations~\cite{gaoOrthogonalizationfreeParallelizableFramework2020}, and the orthogonal Procrustes problem~\cite{schonemannOrthogonalProcrustes1966}. More recently, orthogonality constraints and regularization have also been used in deep learning to improve training stability and robustness~\cite{cisseParsevalNetworks2017,bansalOrthogonalityRegularizationNeurIPS2018}. 

\subsection{Related work and motivation}
\label{subsec:Related_work}
Problem~\eqref{eq:problem} can be tackled by Riemannian methods~\cite{absilOptimizationAlgorithmsMatrix2008a,huBriefIntroductionManifold2020,boumalIntroductionOptimizationSmooth2023b} that preserve feasibility by retractions. Typical first-order methods, including Riemannian gradient descent~\cite{absilOptimizationAlgorithmsMatrix2008a}, Riemannian conjugate gradient~\cite{Sato2021}, and the Cayley-transform-based feasible method~\cite{wenFeasibleMethodOptimizationOrthogonality2013}, have local linear convergence. Alternatively, second-order methods, e.g., Riemannian trust-region methods~\cite{absilTrustRegionMethodsRiemannian2007} and Riemannian Newton-type methods~\cite{edelman1998geometry,absilOptimizationAlgorithmsMatrix2008a}, can achieve local superlinear or quadratic convergence by introducing a subproblem or a Newton equation. However, these methods can incur substantial per-iteration costs from retractions; see, e.g.,~\cite{gaoParallelizableAlgorithmsOptimization2019}. Specifically, depending on the chosen retraction, each iteration may require expensive linear algebra operations. This overhead can become significant in high-dimensional problems or when gradient evaluation is relatively cheap.

To avoid the cost of retractions, recent works studied infeasible methods that do not enforce the constraint at every iteration. Exact penalty models reformulate the constrained problem~\eqref{eq:problem} into an unconstrained one by employing different penalty functions, e.g., the modified augmented Lagrangian~\cite{gaoParallelizableAlgorithmsOptimization2019,xiaoLiuYuanSmoothExactPenalty2022}, Fletcher’s augmented Lagrangian~\cite{fletcherExactPenalty1973,goyensComputingSecondorderPoints2024}, the approximate augmented Lagrangian~\cite{wangDecentralizedOptimizationStiefel2022}, and the constraint-dissolving function~\cite{xiaoLiuExPen2024,xiaoLiuTohCDF2024}. 
More recently, alternating schemes were proposed to tackle~\eqref{eq:problem} by alternately searching along a gradient-related step and a constraint-related step. For instance, the sequential linearized proximal gradient method~\cite{liuXiaoYuanSLPG2024} solved a proximal linearized subproblem followed by a correction step approximating the polar decomposition. This method was further extended by~\cite{javaloyEmbarrassinglySimpleWay2026} in which the subproblem is substituted with a gradient step tangent to the level set of the constraint-defining function. Gratton and Toint~\cite{grattonSimpleFirstorderAlgorithm2025} adaptively switched between the gradient steps projected to the nullspace of the Jacobian of the constraints and an infeasibility-reduction step.

In contrast to alternating schemes, the \textit{landing} method, introduced by Ablin and Peyr\'e~\cite{ablinFastAccurateOptimization2022} for the orthogonal group and extended by Ablin et al.~\cite{ablinInfeasibleDeterministicStochastic2024} to the Stiefel manifold, avoids retractions by coupling an objective-improving direction with a correction direction that drives the iterates toward the manifold. The update takes the form:
\begin{equation}
\label{eq:FOL_intro_iteration}
    X_{k+1} = X_k - \eta_k\,\Lambda_1(X_k) \quad \text{with}\quad         \Lambda_1(X) := T_1(X) + \lambda\nabla\mathcal{N}(X), 
\end{equation}
 where $\Lambda_1$ is called the \textit{landing field}, $T_1(X):=2\myskew(\nabla f(X)X^\top)X$, $\nabla \mathcal{N}(X) = X(X^\top X-I_p)$ is the Euclidean gradient of \emph{infeasibility} measure
\begin{equation}
\label{eq:penalty_function}
    \mathcal N(X):=\frac{1}{4}\normF{X^\top X-I_p}^2,
\end{equation}
$\eta_k$ is the step size, and $\lambda >0$ is a hyperparameter. Gao et al.~\cite{gaoOptimizationFlowsLanding2022} provided a geometric interpretation of the landing field, in which $T_1$ and $\nabla\mathcal{N}$ are respectively tangent and normal to the current layered manifold~\cite{goyensComputingSecondorderPoints2024,goyensGeometricDesignTangent2025f}
\begin{equation}
\label{eq:layered_manifold}
    \mathrm{St}_{X_k^\top X^{}_k}(p,n) := \{Z \in \R^{n\times p} : Z^\top Z = X_k^\top X^{}_k\}.
\end{equation} 
Each update of the landing method involves only matrix multiplications, which is retraction-free and favorable for efficient GPU implementation. Recent works extended this framework to stochastic settings~\cite{ablinInfeasibleDeterministicStochastic2024,varyOptimizationRetractionRandom2024} and distributed optimization~\cite{sunRetractionfreeDecentralizedNonconvex2024,songDistributedRetractionfreeCommunicationefficient2025}. In addition, the local linear convergence of the landing method was proved in~\cite{sunLocalLinearConvergence2024} with a local Riemannian Polyak--{\L}ojasiewicz condition.
Beyond orthogonality constraints, Schechtman et al.~\cite{schechtmanOrthogonalDirectionsConstrained2023} and Vary et al.~\cite{varyOptimizationRetractionRandom2024} generalized the landing framework to equality-constrained optimization, which was further extended to inequality constraints~\cite{shiAdaptiveDirectionalDecomposition2025}. More recently, Goyens and Feppon~\cite{goyensRiemannianLandingMethod} proposed a Riemannian landing method based on a carefully designed metric with local quadratic convergence, and the method was proved to be equivalent to sequential quadratic programming~(SQP). Si and Malick~\cite{SiMalickLanding2026}, as well as Goyens and Feppon~\cite{goyensRiemannianLandingMethod}, integrated backtracking line search into the landing method.

Several existing methods employ a normal update that yields quadratic reduction of infeasibility~\cite[Lem.~11]{liuXiaoYuanSLPG2024}, but they do not exploit second-order information on the objective function, and hence they fail to achieve local superlinear convergence. This motivates the development of a method that makes explicit use of such a normal update while achieving local superlinear convergence.

\subsection{Contributions}
In this paper, we propose a second-order landing (SOL) framework to solve optimization problems on the Stiefel manifold~\eqref{eq:problem}.
At each iteration, the landing update
\begin{subequations}  \label{eq:second_order_landing_framework}
\begin{equation}
X_{k+1} = X_k + \eta_k \Lambda(X_k)
\end{equation}
is retraction-free and driven by a field $\Lambda$ with two components:
\begin{equation}
\Lambda(X) = T(X)+N(X),
\end{equation}
\end{subequations}
where $N(X)$, normal to the layered manifold~\eqref{eq:layered_manifold}, is able to reduce the infeasibility quadratically. The component $T(X)$, tangent to the layered manifold, incorporates second-order information to minimize the objective, enabling local quadratic convergence.

To construct the normal component $N(X)$, we leverage the Newton--Schulz (NS) iteration~\cite{schulzIterativeBerechungReziproken1933}, an iterative method for matrix orthogonalization~\cite{kovarikIterativeMethodsImproving1970,bjorckIterativeAlgorithmComputing1971}. We prove that the Newton--Schulz iteration, $Y_{k+1} = Y_k + N_r(Y_k)$, moves along the normal space of the layered manifold $\mathrm{St}_{Y_k^\top Y^{}_k}(p,n)$, i.e., an order-$r$ NS update
\begin{equation}
    \label{eq:NS_direction_intro}
    N_r(X) := X\left(\sum_{j=0}^{r}(-1)^j\frac{(2j)!}{(j!)^2}\frac{(X^\top X-I_p)^j}{4^j}-I_p\right)
\end{equation}
belongs to the normal space; see~\cref{subsec:normal_geometry}. Hence, we establish a geometric interpretation between the Newton--Schulz iteration and Stiefel manifolds. Moreover, we show that it yields local infeasibility decay of order $r+1$. It is worth noting that the order-$1$ NS update reduces to 
\[
N_1(X)=- \frac 1 2 X(X^\top X-I_p)=-\frac12 \nabla\mathcal{N}(X)
\]
which yields a quadratic reduction in infeasibility, and thus provides a reasonable normal update without any hyperparameters. We adopt $N(X)=N_1(X)$ as the normal component in~\eqref{eq:second_order_landing_framework}. In fact, we notice that the infeasibility-reduction step used in~\cite{liuXiaoYuanSLPG2024,javaloyEmbarrassinglySimpleWay2026,peng2026ns} is exactly one step of the order-$1$ NS iteration. These works employ it purely within first-order frameworks, whereas we provide a geometric view of the update. Furthermore, we exploit its quadratic infeasibility contraction to construct a local quadratically convergent method.

For the tangent component $T(X)$, an intuitive choice is the Riemannian Newton~(RN) update $T_{\mathrm{RN}}$ on the layered manifold, obtained by solving the RN equation. However, a simple combination of the order-$1$ NS update $N_1(X)$ and $T_{\mathrm{RN}}$ does not attain the sought quadratic convergence; see~\cref{fig:RN_vs_N} for an illustrative example. A possible reason is that the RN equation is solely a second-order approximation to the objective on the current layered manifold, while ignoring movement in the normal space. Therefore, the tangent component $T(X)$ should be carefully developed and incorporate the normal component. To this end, we construct a corrected tangent update $T(X)$ by solving a modified Newton equation 
\[
\Hess f(X)[T(X)] = -\grad f(X)-\mathcal A_{\mathrm{N}}(X)[N(X)]
\]
such that the sequence of second-order landing enjoys local quadratic convergence, where $\mathcal{A}_{\mathrm{N}}(X)$, which anticipates the normal component, is a correction operator onto the tangent space; see~\cref{subsec:symmetric_tangent_system} for details. Furthermore, we propose an approximate Newton equation to reduce the computational cost; see~\cref{subsec:corrected_tan_direction}.

\begin{figure}[tbp]
    \centering
     \begin{minipage}[t]{0.58\linewidth}
       \centering
        \vspace{0pt}
        \resizebox{\linewidth}{!}{%
\begin{tikzpicture}[
    % --- Global settings ---
    >=stealth, 
    font=\footnotesize, 
    manifold/.style={line width=1.2pt, color=black},
    helper/.style={dash pattern=on 3pt off 2pt, line width=0.6pt, color=gray!80},
    vec/.style={->, line width=1.0pt}, 
    labelbg/.style={inner sep=1.5pt, outer sep=0pt},
    scale=0.99,
    transform shape,
]

    % =========================================================
    % 0. color
    % =========================================================
    \definecolor{mainRed}{RGB}{214, 39, 40}    
    \definecolor{mainGreen}{RGB}{44, 160, 44}  
    \definecolor{mainBlue}{RGB}{31, 119, 180}  
    \definecolor{darkBlue}{RGB}{120,75,160}
    \definecolor{bgGray}{RGB}{248, 248, 248} 

    % =========================================================
    % 1. Parameter
    % =========================================================
    \def\Rmain{16}    
    \def\Rout{19}     
    \def\Rbottom{15} 
    \def\RX{2.0}   
    \def\angStart{103} 
    \def\angEnd{77}  
    \def\angKKT{87} 
    \def\angX{\angKKT+10} 
    \def\angTrajEnd{\angKKT-5} 

    \coordinate (X) at (\angX:\Rmain+\RX);
    
    \pgfmathsetmacro{\vecMyGreenLen}{0.85*\RX}    
    \pgfmathsetmacro{\angMyGreen}{\angX + 180}
    \pgfmathsetmacro{\vecDarkBlueLen}{4.2}       
    \pgfmathsetmacro{\angDarkBlue}{\angX - 90}
    \pgfmathsetmacro{\vecMyBlueLen}{3}        
    \pgfmathsetmacro{\angMyBlue}{\angX - 90}
    
    % =========================================================
    % 2. Background
    % =========================================================
    \begin{scope}[on background layer]
        \fill[gray!10] (\angStart:\Rout) arc (\angStart:\angEnd:\Rout) --
              (\angEnd:\Rbottom) -- (\angStart:\Rbottom) -- cycle;
    \end{scope}

    % =========================================================
    % 3. Manifold
    % =========================================================
    \draw[helper, gray!40] (\angStart:\Rout) arc (\angStart:\angEnd:\Rout);
    \draw[helper, name path = arcX] (\angX+3:\Rmain+\RX) arc (\angX+3:\angTrajEnd-1:\Rmain+\RX);
    \draw[helper, name path = arcXhat] (\angX+3:\Rmain+\RX-\vecMyGreenLen) arc (\angX+3:\angTrajEnd-1:\Rmain+\RX-\vecMyGreenLen);
    
    \draw[manifold] (\angStart:\Rmain) 
        node[below left, black, xshift=7pt, font=\small\bfseries] {$\mathrm{St}(p, n)$} 
        arc (\angStart:\angEnd:\Rmain);
    
    % =========================================================
    % 4. Trajectory
    % =========================================================
    \coordinate (BottomLeft) at (\angStart:\Rbottom);
    \coordinate (BottomRight) at (\angEnd:\Rbottom);    
    \coordinate (Xstar) at (\angKKT:\Rmain);
    \coordinate (TrajStart) at ($(BottomLeft)!0.55!(BottomRight)$);
    \coordinate (TrajEnd) at (\angTrajEnd:\Rout); 
    
    \draw[mainRed, dashed, line width=1.5pt, name path = traj] plot [smooth, tension=1] coordinates {
        (TrajStart) (Xstar) (TrajEnd)
    };

    \path [name intersections={of=traj and arcX, by=IntPoint1}];
    \path [name intersections={of=traj and arcXhat, by=IntPoint2}];

    \foreach \pt in {IntPoint1} {
        \node[star, star points=5, star point ratio=2.25, fill=darkBlue, draw=white, line width=0.5pt, inner sep=0pt, minimum size=8pt] at (\pt) {};
    }

    \foreach \pt in {IntPoint2} {
        \node[star, star points=5, star point ratio=2.25, fill=mainBlue, draw=white, line width=0.5pt, inner sep=0pt, minimum size=8pt] at (\pt) {};
    }

    \foreach \pt in {Xstar} {
        \node[star, star points=5, star point ratio=2.25, fill=mainRed, draw=white, line width=0.5pt, inner sep=0pt, minimum size=8pt] at (\pt) {};
    }

    \node[labelbg, anchor=north west, xshift=-1pt, yshift=-2pt, text=mainRed] at (Xstar) {$X_\star$};

\node[mainRed, right, align=left, font=\scriptsize, yshift=15pt, xshift=-77pt] 
          at (TrajStart) 
          { \fontsize{7pt}{6pt}\selectfont \text{Optimal solutions on} \\ \ \ \ \text{layered manifolds}};
    % =========================================================
    % 5. Vectors
    % =========================================================
    \coordinate (TipDarkBlue) at ($(X) + (\angDarkBlue:\vecDarkBlueLen)$);
    \coordinate (TipMyGreen) at ($(X) + (\angMyGreen:\vecMyGreenLen)$);
    \coordinate (TipMyBlue) at ($(X) + (\angMyBlue:\vecMyBlueLen)$);

    \coordinate (TipBlack1) at ($(TipDarkBlue) + (TipMyGreen) - (X)$);
    \coordinate (TipBlack2) at ($(TipMyBlue) + (TipMyGreen) - (X)$);

\begin{scope}[on background layer]
  \draw pic[draw, angle radius=5pt, angle eccentricity=1.2]
    {right angle = TipMyGreen--X--TipMyBlue};
\end{scope}
    
    \draw[densely dotted, gray, line width=0.6pt] (TipDarkBlue) -- (TipBlack1);
    \draw[densely dotted, gray, line width=0.6pt] (TipMyGreen) -- (TipBlack1);
    
    \draw[densely dotted, gray, line width=0.6pt] (TipMyBlue) -- (TipBlack2);

    \draw[vec, mainGreen] (X) -- (TipMyGreen) 
         node[midway, left, labelbg, text=mainGreen, xshift=-2pt]
         {$N_1(X)$};

    \draw[vec, darkBlue, line width=0.8pt, dash dot] (X) -- (TipDarkBlue) 
         node[above left, labelbg, text=darkBlue, xshift=1pt] {$T_{\mathrm{RN}}(X)$};

    \draw[->, >=stealth, line width=1.0pt, color=black!80, dash dot] (X) -- (TipBlack1) 
        node[right, labelbg, xshift=2pt] {$\Lambda_{\mathrm{RN}}(X)$};
    
    \draw[vec, mainBlue, line width=1.2pt] (X) -- (TipMyBlue) 
         node[above left, labelbg, text=mainBlue, xshift=-3pt, yshift=1pt] {$T(X)$};
    
    \draw[vec, black, line width=1.2pt] (X) -- (TipBlack2) 
        node[midway, below left, labelbg, xshift=1pt, yshift=1pt] {$\Lambda(X)$};

    % =========================================================
    % 6. Point
    % =========================================================
    \fill[black] (X) circle (1.8pt) node[above, labelbg, yshift=2pt] {$X$};

\end{tikzpicture}%
}
    \end{minipage}
    \hfill
    \begin{minipage}[t]{0.41\linewidth}
        \centering
        \vspace{10pt}
        \includegraphics[width=\linewidth]{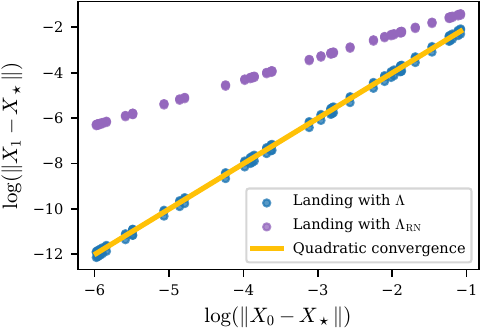}
    \end{minipage}

\caption{An illustration of second-order landing methods with intuitive and corrected Riemannian Newton updates, $\Lambda_{\mathrm{RN}}=T_{\mathrm{RN}}+N_1$ and $\Lambda=T+N_1$. Left: A geometric illustration shows that $T_{\mathrm{RN}}$ aims to find the solution on the current layer while the corrected update $T$ brings $\Lambda$ to the solution on the next layer; Right: A numerical comparison on one step of second-order landing methods with different initial points reports that in a small asymptotic regime, landing with $\Lambda$ enjoys quadratic convergence (illustrated by the yellow reference line of slope 2), while an intuitive landing with $\Lambda_{\mathrm{RN}}$ does not.
}
\label{fig:RN_vs_N}
\end{figure}

We prove that the SOL method achieves local quadratic convergence, and its inexact variant attains a local superlinear convergence. Numerical experiments on the orthogonal Procrustes problem, principal component analysis, and real-data independent component analysis validate the fast local convergence of the proposed SOL method, and demonstrate that SOL performs better than the existing penalty-based method, the inexact Riemannian Newton method, and SQP.

\subsection{Notation}
Let $\mathcal S_{\sym}(m)$ and $\mathcal{S}_{\myskew}(m)$ be the sets of all $m\times m$ real symmetric and skew-symmetric matrices, respectively. The set of all $n\times p$ real matrices with full column rank is denoted by $\mathbb{R}^{n\times p}_*$. Let $\|\cdot\|$ denote the operator norm for linear operators, $\|\cdot\|_2$ the spectral norm for matrices, and $\|\cdot\|_{\mathrm F}$ the Frobenius norm.
The $i$-th largest singular value of a matrix $M$ is denoted by $\sigma_{i}(M)$, and the smallest one by $\sigma_{\min}(M)$.
The Euclidean inner product is given by $\ip{X_1}{X_2}:=\tr(X_1^\top X_2)$ with trace operator $\tr(\,\cdot\,)$.
For a square matrix $A$, its symmetric and skew-symmetric parts are defined by $\sym(A):=\frac{1}{2}(A+A^\top)$ and $\myskew(A):=\frac12(A-A^\top)$, respectively. The Fr{\'e}chet derivative of a mapping $F:\mathcal{E}_1\to\mathcal{E}_2$ at $X$ is denoted by $\mathrm{D}F(X)$.
For a smooth function $f$ defined on a Euclidean space, 
$\nabla f$ and $\nabla^2 f$ denote its Euclidean gradient and Euclidean Hessian, respectively.
Given $X\in\mathbb{R}^{n\times p}_*$, $\St_{X^\top X}(p,n)$ denotes the layered manifold, as defined in~\eqref{eq:layered_manifold}. Throughout this paper, unless otherwise stated, $\grad f(X)$ and $\Hess f(X)$ denote the Riemannian gradient and Riemannian Hessian of $f$ on $\St_{X^\top X}(p,n)$, respectively, with respect to the Riemannian metric $g$ defined in~\eqref{eq:extended_canonical_metric}.

\subsection{Organization}
\Cref{section:Pre} reviews the first-order landing algorithm alongside its geometric interpretation.
We establish the geometric interpretation and infeasibility-reduction property of the Newton--Schulz iteration in~\cref{sec:design_normal_term}, and construct the normal component. \Cref{sec:design_tangent_term} identifies the limitation of the Riemannian Newton update and derives a modified Newton equation, together with its tractable approximation.
We propose the SOL method and its inexact variant in~\cref{sec:SOL_methods}, and prove the local convergences. Numerical experiments are provided in \cref{sec:numerical_examples} and concluding remarks are given in \cref{sec:conclusion}.

\section{The landing method}
\label{section:Pre}

In this section, we review the preliminaries of the landing framework~\cite{ablinFastAccurateOptimization2022,ablinInfeasibleDeterministicStochastic2024} and present its geometric interpretation~\cite{gaoOptimizationFlowsLanding2022}. 

The first-order landing method~\cite{ablinFastAccurateOptimization2022,ablinInfeasibleDeterministicStochastic2024} tackles~\eqref{eq:problem} by an update as in~\eqref{eq:FOL_intro_iteration}:
\begin{equation}
\label{eq:FOL_iteration}
\begin{aligned}
        \Lambda_1(X) &= T_1(X) + \lambda\nabla\mathcal{N}(X), 
        \\
        X_{k+1} &= X_k - \eta_k\,\Lambda_1(X_k),
\end{aligned}
\end{equation}
where 
 \begin{equation}
 \label{eq:relative_grad}
T_1(X)=2\,\myskew(\nabla f(X)X^\top)X,
 \end{equation}
$\nabla \mathcal{N}(X) = X(X^\top X-I_p)$ is the Euclidean gradient of $\mathcal{N}(X)$ in~\eqref{eq:penalty_function}, and $\lambda >0$ is a hyperparameter.
This additive structure admits a geometric interpretation in terms of the layered manifold $\St_{X^\top X^{}}(p,n)$~\eqref{eq:layered_manifold}: $T_1(X)$ belongs to the tangent space and $\nabla\mathcal{N}(X)$ belongs to the normal space; see~\cite{gaoOptimizationFlowsLanding2022}.

Let $X\in\mathbb{R}^{n\times p}_*$. The tangent space at $Y\in\St_{X^\top X^{}}(p,n)$ can be parametrized as
\begin{equation}
\label{eq:tangent_space}
\begin{aligned}
\mathrm T_Y\mathrm{St}_{X^\top X}(p,n)
&=\{\xi\in\mathbb{R}^{n\times p}:\xi^\top Y+Y^\top\xi=0\}\\
&=\{W Y:W\in\mathcal{S}_{\mathrm{skew}}(n)\}.
\end{aligned}
\end{equation}
Let the ambient space $\R_*^{n\times p}$ be equipped with the following metric for all $Y \in \R_*^{n\times p}$ and $\xi, \eta \in \R^{n\times p}$,
\begin{equation}
\label{eq:extended_canonical_metric}
g_Y(\xi,\eta):=\ip{\xi}{(I_p-\frac12Y(Y^\top Y)^{-1}Y^\top)\eta(Y^\top Y)^{-1}}.
\end{equation}
When the layered manifold $\St_{X^\top X}(p,n)$ is viewed as a Riemannian submanifold of $(\mathbb{R}^{n\times p}_*, g)$, the component $T_1(X)$ coincides with the Riemannian gradient of $f$ restricted to $\St_{X^\top X}(p,n)$, i.e., 
\begin{equation}
    \label{eq:Riemannian_gradient}
    \grad f(X)=T_1(X),
\end{equation}
where the Riemannian gradient~\cite[Eq.\;(3.31)]{absilOptimizationAlgorithmsMatrix2008a} is the unique tangent vector satisfying $g_X(\grad f(X),\eta)=\mathrm{D} f(X)[\eta]$ for all $\eta\in\mathrm{T}_X\mathrm{St}_{X^\top X}(p,n)$. The second term in $\Lambda_1(X)$ involves the Euclidean gradient of the infeasibility measure $\mathcal{N}(X)$ defined in~\eqref{eq:penalty_function}. The normal space of $\mathrm{St}_{X^\top X}(p,n)$ under the metric $g$ is
\begin{equation}
\label{eq:normal_space}
\N_Y\mathrm{St}_{X^\top X}(p,n)
=\{Y(X^\top X)^{-1}S:S \in \mathcal{S}_{\mathrm{sym}}(p)\}.
\end{equation}
Moreover, we have $\nabla \mathcal N(X)=X(X^\top X)^{-1}((X^\top X)^2-X^\top X)\in \N_X(\mathrm{St}_{X^\top X}(p,n))$. 

In addition, the two components are orthogonal under the Euclidean metric, $\langle T_1(X),\, \nabla\mathcal{N}(X) \rangle = 2\langle \mathrm{skew}(\nabla f(X)X^\top),\, X(X^\top X - I_p)X^\top \rangle = 0$. 
Consequently, for all $X \in \R_*^{n\times p}$, the landing field $\Lambda_1(X)$ vanishes if and only if both $T_1(X)=0$ and $\nabla\mathcal{N}(X)=0$, i.e.,
\[
 \grad f(X)=0 \quad \text{and} \quad X\in\St(p,n),
\]
which characterizes the first-order stationary points~\cite{absilOptimizationAlgorithmsMatrix2008a} of the original constrained problem~\eqref{eq:problem}.

Although the iterates are not constrained on the Stiefel manifold, the convergence analysis relies on a local \emph{safe region}.
\begin{definition}
\label{def:safety_region}
    The safe region with $\varepsilon \in (0,1)$ is defined as $\mathrm{St}(p,n)^\varepsilon:=\{X\in\mathbb{R}^{n\times p}\mid\|X^\top X-I_p\|_\mathrm{F}\leq\varepsilon\}$.
\end{definition}

The first-order landing method has several appealing features: it avoids retractions, involves only matrix multiplications, and still enjoys an iteration complexity bound comparable to that of 
its Riemannian gradient counterpart (see, e.g.,~\cite[Prop.~9]{ablinInfeasibleDeterministicStochastic2024}). We utilize the framework of first-order landing and develop a retraction-free second-order method. The following sections are devoted to concretizing $T(X)$ and $N(X)$ such that the update~\eqref{eq:second_order_landing_framework} achieves fast local convergence.

\section{The normal component}
\label{sec:design_normal_term}
In this section, we construct the normal component $N(X)$ in the second-order landing framework~\eqref{eq:second_order_landing_framework}, in which the role of $N(X)$ is to reduce infeasibility. The recent renaissance of the Newton--Schulz (NS) iteration~\cite{schulzIterativeBerechungReziproken1933} in large language models~\cite{jordan2024muon} inspires the development of normal updates. Moreover, we provide a geometric interpretation of the NS iteration.

The Newton--Schulz iteration provides a cheap approximation to the polar factor $Y_0(Y_0^\top Y_0)^{-1/2}$ for a matrix $Y_0\in\mathbb{R}^{n\times p}_*$.
As noted by Kovarik~\cite{kovarikIterativeMethodsImproving1970} and Björck et al.~\cite{bjorckIterativeAlgorithmComputing1971}, the inverse square root can be approximated by a truncated Taylor expansion. Specifically, given the current iterate $Y_k$ and defining
\begin{equation}
    \label{eq:E_k_notation}
    E_k := Y_k^\top Y^{}_k - I_p,
\end{equation}
the order-$r$ Newton--Schulz iteration takes the form
\begin{equation}
    \label{eq:NS_multiplicative}
    Y_{k+1} = Y_k\, q_r(E_k),
    \qquad
    q_r(S) := \sum_{j=0}^{r}(-1)^j\frac{(2j)!}{(j!)^2}\frac{S^j}{4^j},
\end{equation}
where $q_r(S)$ is the degree-$r$ Taylor polynomial approximation of $(I_p+S)^{-1/2}$. 
Since this iteration requires only matrix multiplications, it is well-suited to GPU implementation and has recently been used for large-scale orthogonalization in machine learning, e.g., in the \textit{Muon} optimizer~\cite{jordan2024muon}. As shown in \cref{prop:NS_geometry}, the  iteration~\eqref{eq:NS_multiplicative} also admits a geometric interpretation as a displacement normal to the layered manifold, making it appropriate to construct the normal component.

\subsection{Geometric interpretation of the Newton--Schulz iteration}
\label{subsec:normal_geometry}

To fit the landing framework, which is formulated in additive form, we rewrite the Newton--Schulz iteration~\eqref{eq:NS_multiplicative} as defined in~\eqref{eq:NS_direction_intro},
\begin{equation}
    \label{eq:NS_additive}
    Y_{k+1} = Y_k + N_r(Y_k),
    \qquad
    N_r(Y_k) = Y_k\bigl(q_r(E_k)-I_p\bigr).
\end{equation}
Thus, the Newton--Schulz iteration can be interpreted as an additive update along $N_r(Y_k)$. The following proposition shows that $N_r(Y_k)$ belongs to the normal space of the layered Riemannian manifold $\St_{Y_k^\top Y^{}_k}(p,n)$ at $Y_k$ with respect to the metric $g$ defined in~\eqref{eq:extended_canonical_metric}. 

\begin{proposition}[geometric interpretation of Newton--Schulz]\label{prop:NS_geometry}
Let $r \ge 1$ be a fixed integer. For all $Y\in\mathbb{R}^{n \times p}_*$, the order-$r$ Newton--Schulz update $N_r(Y)$ in~\eqref{eq:NS_additive} lies in the normal space~\eqref{eq:normal_space} of the layered manifold at $Y$, i.e.,
\[
    N_r(Y) \in \mathrm{N}_{Y}\mathrm{St}_{Y^\top Y}(p,n).
\]
\end{proposition}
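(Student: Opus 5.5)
The plan is to exhibit $N_r(Y)$ explicitly in the form $Y(Y^\top Y)^{-1}S$ with $S\in\mathcal S_{\sym}(p)$, which by the characterization of the normal space in~\eqref{eq:normal_space} (taking $X=Y$) is exactly what membership in $\mathrm{N}_Y\St_{Y^\top Y}(p,n)$ means. Since $Y\in\mathbb{R}^{n\times p}_*$, the Gram matrix $G:=Y^\top Y$ is symmetric positive definite and hence invertible. We can therefore write
\[
N_r(Y)=Y\bigl(q_r(E)-I_p\bigr)=Y G^{-1}\,G\bigl(q_r(E)-I_p\bigr),\qquad E=G-I_p .
\]
It then suffices to set $S:=G\bigl(q_r(E)-I_p\bigr)$ and check that $S$ is symmetric.

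For the symmetry, note that $q_r(E)-I_p$ is a polynomial in $E$ with real coefficients, and $E=G-I_p$ is itself a polynomial in $G$. Hence $q_r(E)-I_p=p(G)$ for some real polynomial $p$, and $S=G\,p(G)$ is again a real polynomial in the symmetric matrix $G$. Polynomials in a symmetric matrix are symmetric, since $(G^j)^\top=G^j$ and sums of symmetric matrices are symmetric. Equivalently, $G$ commutes with $q_r(E)$, so $S^\top=(q_r(E)-I_p)^\top G=(q_r(E)-I_p)G=G(q_r(E)-I_p)=S$. Therefore $S\in\mathcal S_{\sym}(p)$, and $N_r(Y)=Y(Y^\top Y)^{-1}S$ lies in the normal space.

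There is no real obstacle here. The only point needing care is that~\eqref{eq:normal_space} is stated for $Y$ on the layer $\St_{X^\top X}(p,n)$ with $(X^\top X)^{-1}$. Taking $X=Y$ makes $Y^\top Y=X^\top X$, so the factor $(X^\top X)^{-1}$ coincides with $G^{-1}$. The case $r\ge1$ plays no special role; the argument works for any real polynomial $q_r$. As a sanity check, for $r=1$ we get $S=-\tfrac12 G E=-\tfrac12\bigl((Y^\top Y)^2-Y^\top Y\bigr)$, which matches, up to the factor $-\tfrac12$, the earlier expression for $\nabla\mathcal N(X)$.
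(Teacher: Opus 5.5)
Your proof is correct and follows essentially the same route as the paper: you factor $N_r(Y)=Y(Y^\top Y)^{-1}\,(Y^\top Y)\bigl(q_r(Y^\top Y-I_p)-I_p\bigr)$ and observe that the second factor is symmetric, being a product of commuting polynomials in the symmetric matrix $Y^\top Y$. Your $r=1$ sanity check against the expression for $\nabla\mathcal N(X)$ is also consistent with the paper.
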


\begin{proof}
    It follows from~\eqref{eq:NS_additive} and~\eqref{eq:E_k_notation} that
    \[
    N_r(Y) = Y (q_r(Y^\top Y-I_p) - I_p) = Y(Y^\top Y)^{-1} \underbrace{(Y^\top Y) (q_r(Y^\top Y-I_p) - I_p)}_{S}.
    \]
    Since $Y^\top Y$ and $q_r(Y^\top Y-I_p)$ are both polynomials of the symmetric matrix $Y^\top Y$, they are symmetric and commute. Consequently, the product $S$ is symmetric. In view of the normal space~\eqref{eq:normal_space}, $N_r(Y) \in \mathrm{N}_{Y}\mathrm{St}_{Y^\top Y}(p,n)$.
\end{proof}

From the proof of~\cref{prop:NS_geometry}, we observe that the NS update $N_r(Y)$ also lies in the normal space of $\St_{Y^\top Y^{}}(p,n)$ under the standard Euclidean metric, which is $\{YS:S \in \mathcal{S}_{\mathrm{sym}}(p)\}$.

The efficiency of the NS iteration is rooted in its rapid convergence for orthogonalization. The next proposition quantifies the high-order contraction rate of the NS iteration; see a similar result for square matrices~\cite{KenneyRationalIterativeMatrixSign1991}.

\begin{proposition}
\label{prop:NS_convergence} 
Let $Y_0\in\St(p,n)^\varepsilon$ where \(\St(p,n)^\varepsilon\) is the safe region from~\cref{def:safety_region} with $\varepsilon\in(0,1)$. Let $(Y_k)$ be the sequence of the order-$r$ Newton--Schulz iteration~\eqref{eq:NS_multiplicative} with the initial point $Y_0$. It holds that 
\[
\normF{Y_k^\top Y^{}_k-I_p} \le \normF{Y_0^\top Y^{}_0-I_p}^{(r+1)^k}\leq\varepsilon^{(r+1)^k} .
\]
\end{proposition}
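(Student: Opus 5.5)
The plan is to reduce everything to a scalar polynomial inequality acting on the eigenvalues of $E_k$, and then to induct on $k$.

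Set $E_k := Y_k^\top Y_k - I_p$. From~\eqref{eq:NS_multiplicative}, and because $q_r(E_k)$ is symmetric and commutes with $E_k$,
\[
E_{k+1} = q_r(E_k)\,(I_p+E_k)\,q_r(E_k) - I_p = h(E_k),
\qquad h(x) := (1+x)\,q_r(x)^2 - 1 .
\]
Diagonalize $E_k = U\,\mathrm{diag}(\lambda_i)\,U^\top$. Then $\normF{E_{k+1}}^2 = \sum_i h(\lambda_i)^2$.

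The core step is the scalar bound
\[
|h(x)| \le |x|^{r+1} \qquad \text{for } |x|\le 1 .
\]
Granting it, we get
\[
\normF{E_{k+1}}^2 \le \sum_i |\lambda_i|^{2(r+1)} \le \Bigl(\sum_i \lambda_i^2\Bigr)^{r+1},
\]
so $\normF{E_{k+1}} \le \normF{E_k}^{r+1}$. The middle inequality is the superadditivity of $t\mapsto t^{r+1}$ on nonnegative reals. Since $\normF{E_0}\le\varepsilon<1$, induction keeps every iterate in the safe region $\St(p,n)^\varepsilon$, so $|\lambda_i|\le 1$ at every step. Iterating gives $\normF{E_k}\le\normF{E_0}^{(r+1)^k}\le\varepsilon^{(r+1)^k}$.

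For the scalar bound, substitute $x=-t$ and write $q_r(-t)=\sum_{j=0}^r b_j t^j$ with $b_j=\binom{2j}{j}/4^j>0$. These are exactly the Taylor coefficients of $(1-t)^{-1/2}$. Let $Q(t):=q_r(-t)$ and $c_m := [Q^2]_m$, the coefficient of $t^m$ in $Q^2$, for $0\le m\le 2r$.

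First, for $m\le r$, $c_m$ coincides with the $m$-th coefficient of $\bigl((1-t)^{-1/2}\bigr)^2=(1-t)^{-1}$, so $c_m=1$.

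Second, for $r< m\le 2r$, $c_m=\sum_{i+j=m,\ i,j\le r} b_ib_j$. As $m$ increases, this sum loses terms and each summand is positive. Moreover the surviving pairs for $m+1$ are in bijection with pairs for $m$ via shifting, using that $b_j$ is decreasing. Hence $c_m$ is nonincreasing in $m$, with $c_r=1$.

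Now $h(-t)=(1-t)Q(t)^2-1$ has coefficient $c_m-c_{m-1}$ at $t^m$, where $c_{-1}:=0$ and $c_{2r+1}:=0$. This gives:
\begin{itemize}
\item the coefficients vanish for $0\le m\le r$ (the constant term being $c_0-1=0$);
\item they are $\le 0$ for $r+1\le m\le 2r+1$.
\end{itemize}
So $h(-t)=-\sum_{m=r+1}^{2r+1} d_m t^m$ with $d_m\ge 0$. Evaluating at $t=1$ gives $\sum_m d_m = -h(-1) = 1$. Therefore, for $|x|\le 1$,
\[
|h(x)| \le \sum_m d_m |x|^m \le |x|^{r+1}\sum_m d_m = |x|^{r+1}.
\]

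I expect the main obstacle to be the monotonicity of $c_m$ for $m>r$, which is the step that fixes the sign pattern of the coefficients. If the shifting argument turns out to be delicate, I would instead argue directly from the sign structure of $(1-t)\bigl(Q^2-(1-t)^{-1}\bigr)$. The case $r=1$, where $h(x)=-\tfrac34x^2+\tfrac14x^3$, serves as a sanity check.
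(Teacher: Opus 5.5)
Your proof is correct, and it takes a genuinely different, self-contained route. The paper passes to the thin SVD $Y_0=U\Sigma_0V^\top$ and uses orthogonal invariance to write $Y_k=U\Sigma_kV^\top$. It then simply cites Kenney and Laub~\cite[Thm.~5.2]{KenneyRationalIterativeMatrixSign1991} (Pad\'e iterations for the matrix sign function), applied to the diagonal matrices $\Sigma_k$; no estimate is actually derived in the paper.

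You instead close the recursion on $E_k$ alone, $E_{k+1}=h(E_k)$, and prove the underlying scalar fact directly. Because $Q$ is the degree-$r$ truncation of $(1-t)^{-1/2}$, the coefficients of $(1-t)Q(t)^2-1$ vanish up to order $r$. The remaining ones are nonpositive by the monotonicity of $c_m$, and they sum to $h(-1)=-1$.

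Your shifting argument is sound. For $m\ge r$,
$c_{m+1}=\sum_{i=m+1-r}^{r}b_ib_{m+1-i}\le\sum_{i=m+1-r}^{r}b_ib_{m-i}\le c_m$,
because $b_{j+1}/b_j=(2j+1)/(2j+2)<1$. Strictly speaking this is an injection of index pairs with one term dropped, not a bijection.

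What your route buys is transparency. It shows why the order is exactly $r+1$ and why the constant is exactly $1$. It also makes visible where $\varepsilon<1$ is used: it keeps the spectrum of $E_k$ in $[-1,1]$ and the iterates in $\St(p,n)^\varepsilon$. The paper leaves this hidden in the hypothesis of the cited theorem. The paper's route buys brevity and places the result inside the general Pad\'e family.

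One simplification is available. Once you have $h(x)=-\sum_{m=r+1}^{2r+1}d_m(-x)^m$ with $d_m\ge 0$ and $\sum_m d_m=1$, you do not need the eigendecomposition or the superadditivity step. Submultiplicativity gives $\normF{E_{k+1}}\le\sum_m d_m\normF{E_k}^m\le\normF{E_k}^{r+1}$ directly, and the same estimate holds in any submultiplicative norm.
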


\begin{proof}
    Let $Y_0=U\Sigma_0V^\top$ be the thin singular value decomposition of $Y_0$ and $(\Sigma_k)$ be generated by the order-$r$ Newton--Schulz iteration~\eqref{eq:NS_multiplicative} with the initial point~$\Sigma_0$. Since the NS iteration is orthogonally invariant, it holds that $Y_k = U\Sigma_kV^\top$. By applying~\cite[Thm.~5.2]{KenneyRationalIterativeMatrixSign1991} to the square matrices $(\Sigma_k)$ and combining $Y_0\in\St(p,n)^\varepsilon$, we have $\normF{Y_k^\top Y^{}_k-I_p}=\normF{\Sigma_k^2-I_p}  \leq \normF{\Sigma^2_0-I_p}^{(r+1)^k}=\normF{Y_0^\top Y^{}_0-I_p}^{(r+1)^k}\leq\varepsilon^{(r+1)^k}$.
\end{proof}

In the next result, we record that choosing $r=1$ yields local quadratic convergence, a property already obtained in~\cite[Lem.~11]{liuXiaoYuanSLPG2024}.

\begin{corollary}
\label{cor:quadratic_decay_NS}
Let $Y_0\in\St(p,n)^\varepsilon$ where \(\St(p,n)^\varepsilon\) is the safe region from~\cref{def:safety_region} with $\varepsilon\in(0,1)$. Let $(Y_k)$ be generated by the order-$1$ Newton--Schulz iteration\begin{equation}\label{eq:NS_update_order_1}
    Y_{k+1}=Y_k+N_1(Y_k)=Y_k(\tfrac32 I_p-\tfrac12 Y_k^\top Y^{}_k),
\end{equation}
    with the initial point $Y_0$. It follows that
\[
\normF{Y_k^\top Y^{}_k-I_p} \le \normF{Y_0^\top Y^{}_0-I_p}^{2^k}\leq\varepsilon^{2^k} .
\]
\end{corollary}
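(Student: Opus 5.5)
This is the case $r=1$ of \cref{prop:NS_convergence}, so the quickest route is to specialize that result. First check that the order-$1$ update matches \eqref{eq:NS_update_order_1}. Since $q_1(S)=I_p-\tfrac12 S$, the update is $Y_{k+1}=Y_k(I_p-\tfrac12(Y_k^\top Y_k-I_p))=Y_k(\tfrac32 I_p-\tfrac12 Y_k^\top Y_k)$, and $N_1(Y_k)=Y_k(q_1(E_k)-I_p)=-\tfrac12 Y_kE_k$. Setting $r+1=2$ in \cref{prop:NS_convergence} then gives both stated inequalities immediately.

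For a self-contained argument that does not rely on the cited Kenney--Laub theorem, I would work directly with the error matrix $E_k=Y_k^\top Y_k-I_p$. Write $Y_{k+1}=Y_k(I_p-\tfrac12E_k)$. Because $Y_k^\top Y_k=I_p+E_k$ commutes with $E_k$,
\[
Y_{k+1}^\top Y_{k+1}=(I_p-\tfrac12E_k)(I_p+E_k)(I_p-\tfrac12E_k).
\]
Expanding the product yields
\[
E_{k+1}=-\tfrac34E_k^2+\tfrac14E_k^3=-\tfrac14E_k^2(3I_p-E_k).
\]
Diagonalize the symmetric $E_k$ with eigenvalues $e_i$. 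The eigenvalues of $E_{k+1}$ are then $\phi(e_i)$, where $\phi(e)=-\tfrac14e^2(3-e)$.

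Next, bound the new error by the square of the old one. If $\|E_k\|_{\mathrm F}\le\varepsilon<1$, then every $|e_i|<1$, so $|3-e_i|\le 4$ and hence $|\phi(e_i)|\le e_i^2$. Summing over eigenvalues,
\[
\|E_{k+1}\|_{\mathrm F}^2=\sum_i\phi(e_i)^2\le\sum_i e_i^4\le\Bigl(\sum_i e_i^2\Bigr)^2=\|E_k\|_{\mathrm F}^4,
\]
that is, $\|E_{k+1}\|_{\mathrm F}\le\|E_k\|_{\mathrm F}^2$.

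Induction finishes the argument. The bound gives $\|E_{k+1}\|_{\mathrm F}\le\varepsilon^2\le\varepsilon$, so the iterates stay in the safe region and the step can be repeated. Iterating yields $\|E_k\|_{\mathrm F}\le\|E_0\|_{\mathrm F}^{2^k}\le\varepsilon^{2^k}$.

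The only delicate step is the scalar bound $|3-e|\le 4$. It relies on the safe-region condition $\varepsilon<1$, which ensures that every eigenvalue of $E_k$ lies in $(-1,1)$.
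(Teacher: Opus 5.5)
Your first route is exactly the paper's. The corollary is stated without a separate proof as the case $r=1$ of \cref{prop:NS_convergence}. That proposition's proof passes to the diagonal factor of a thin SVD, since every iterate has the form $Y_k=U\Sigma_kV^\top$, and then invokes Kenney--Laub's Theorem~5.2 to obtain $\|\Sigma_k^2-I_p\|_{\mathrm F}\le\|\Sigma_0^2-I_p\|_{\mathrm F}^{(r+1)^k}$.

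Your self-contained argument is also correct and is a genuinely different route. Instead of citing that theorem, you derive the exact recursion $E_{k+1}=-\tfrac14E_k^2(3I_p-E_k)$ and finish with an elementary scalar estimate and induction. The underlying idea is the same spectral reduction: you diagonalize $E_k$ at each step rather than the SVD of $Y_0$ once.

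Your version has three advantages:
\begin{itemize}
\item it is transparent and does not depend on an external theorem;
\item it makes the invariance of $\St(p,n)^\varepsilon$ explicit;
\item it gives the slightly sharper bound $\|E_{k+1}\|_{\mathrm F}\le\tfrac{3+\varepsilon}{4}\|E_k\|_{\mathrm F}^2$, because $0<3-e_i\le 3+\varepsilon$.
\end{itemize}
What the citation buys is generality: it handles every order $r$ at once. Extending your computation would require proving $|(1+e)q_r(e)^2-1|\le|e|^{r+1}$ for $|e|<1$, which is essentially the scalar content of the cited result.

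One small correction to your closing remark. The lower bound $e_i\ge-1$ behind $|3-e_i|\le 4$ holds automatically, because $Y_k^\top Y_k$ is positive semidefinite. What the safe-region hypothesis actually supplies is the upper bound $e_i\le\varepsilon$. The condition $\varepsilon<1$ then gives $\varepsilon^2\le\varepsilon$, which keeps every iterate in the safe region so the step can be repeated.
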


The above results show that the Newton--Schulz iteration is favorable for the second-order landing framework~\eqref{eq:second_order_landing_framework} as it provides a geometrically valid normal update and reduces infeasibility at a high-order rate using only matrix multiplications.

\subsection{The Newton--Schulz normal update}
\label{subsec:normal_construction}

In view of~\cref{cor:quadratic_decay_NS}, we adopt the order-1 NS update in the proposed SOL framework~\eqref{eq:second_order_landing_framework}:
\begin{equation}
\label{eq:normal_component}
    N(X):=N_1(X)= -\tfrac12 X(X^\top X^{}-I_p)=-\tfrac12 \nabla\mathcal N(X).
\end{equation}
Notice that $N_1(X)$ coincides with the normal component in the first-order landing iteration~\eqref{eq:FOL_iteration} with $\lambda=\tfrac12$. Thus, the normal component is specified without introducing a tunable hyperparameter.

\begin{remark}
    As mentioned in the introduction, several existing methods (e.g.,~\cite{liuXiaoYuanSLPG2024,javaloyEmbarrassinglySimpleWay2026,peng2026ns}) adopt the same update as~\eqref{eq:NS_update_order_1} to reduce infeasibility. These works, however, did not fully make use of the quadratic infeasibility-reduction property (\cref{cor:quadratic_decay_NS}) to attain fast local convergence. In contrast, we not only interpret~\eqref{eq:NS_update_order_1} geometrically but also exploit its properties to develop a locally quadratically convergent method.
\end{remark}

\section{The tangent component}
\label{sec:design_tangent_term}
This section derives the tangent component for the second-order landing framework~\eqref{eq:second_order_landing_framework}. While the Riemannian Newton update on the associated layered manifold serves as an intuitive candidate, it fails to attain local quadratic convergence, see~\cref{fig:RN_vs_N}. To compensate for the normal-induced perturbation of the Riemannian gradient, we formulate a modified Newton equation to yield a valid correction. Furthermore, to enhance computational efficiency, we conclude by proposing an approximation to this equation.
 
\subsection{An intuitive choice for the tangent component}
\label{subsect:RN_tangent_direction}
In light of the geometric insights in~\cref{section:Pre}, it is intuitive to construct the tangent component based on the Riemannian Newton update on $\mathrm{St}_{X^\top X}(p,n)$. 
Evaluating this update requires the Riemannian Hessian~\cite[Def.~5.5.1]{absilOptimizationAlgorithmsMatrix2008a} of $f$
with respect to the metric~$g$ defined in~\eqref{eq:extended_canonical_metric}, which involves
the $g$-orthogonal projection $\Pi_X^{\mathrm{T}}(A)$ from $\mathbb{R}^{n\times p}$ onto $\T_X\mathrm{St}_{X^\top X}(p,n)$ with expression~\cite[Prop.~4]{goyensGeometricDesignTangent2025f}
\begin{equation}
\label{eq:projection_T}
\Pi_X^{\mathrm{T}}(A)=A-X(X^\top X)^{-1}\sym(X^\top A).
\end{equation}

\begin{proposition}[Riemannian Hessian]
\label{prop:Riemannian_Hessian_action}
Let $X\in\R^{n\times p}_*$. For all tangent vectors $V\in \T_X\mathrm{St}_{X^\top X}(p,n)$, the Riemannian Hessian of $f$ at $X$ is given by
\begin{equation}
\label{eq:Riemannian_Hessian}
\begin{aligned}
\Hess f(X)[V]
={}&\Pi_X^{\mathrm{T}}\Big(
2\,\myskew(\nabla^2 f(X)[V]X^\top)X
+2\,\myskew(\nabla f(X)V^\top)X \\
&\qquad\quad
+2\,\myskew(\nabla f(X)X^\top)V
-(I+P_X)\,\Xi_X(V)
\Big),
\end{aligned}
\end{equation}
where $Q_X:=(X^\top X)^{-1}$, $P_X:=XQ_XX^\top$, $G(X):=2\myskew(\nabla f(X)X^\top)X$,
\begin{equation}
\label{eq:Xi_term}
\Xi_X(V)
:=
\frac12\Big(VQ_XX^\top G(X)+G(X)Q_XX^\top V\Big)
+\frac14\,XQ_X\Big(V^\top G(X)+G(X)^\top V\Big),
\end{equation}
and $\Pi_X^{\mathrm{T}}$ is the $g$-orthogonal projection onto $\T_X\mathrm{St}_{X^\top X}(p,n)$ given in~\eqref{eq:projection_T}.
\end{proposition}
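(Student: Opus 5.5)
We use the standard characterization of the Riemannian Hessian on a Riemannian submanifold of a (non-Euclidean) ambient space. $\St_{X^\top X}(p,n)$ is a Riemannian submanifold of $(\R^{n\times p}_*,g)$, so for tangent $V$ we have
\[
\Hess f(X)[V]=\Pi^{\T}_X\big(\bar\nabla_V \bar G\big),
\]
where $\bar G$ is any smooth extension of $\grad f$ to a neighborhood in $\R^{n\times p}_*$ and $\bar\nabla$ is the Levi-Civita connection of $g$. The natural extension is $\bar G(Y)=G(Y)=2\myskew(\nabla f(Y)Y^\top)Y$. By \eqref{eq:Riemannian_gradient} it coincides with $\grad f$ on every layer, and it is in fact the $g$-gradient of $f$ on the whole ambient space restricted to tangent directions. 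The ambient connection is written as $\bar\nabla_V\bar G=\D G(X)[V]+\Gamma_X(V,G(X))$, with $\Gamma_X$ the Christoffel-type symmetric bilinear form of the metric $g_Y(\xi,\eta)=\ip{\xi}{M_Y(\eta)}$, where $M_Y(\eta)=(I-\tfrac12 P_Y)\eta Q_Y$.

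\textbf{Step 1 (directional derivative).} Differentiating $G$ along $V$ gives exactly the first three terms:
\[
2\myskew(\nabla^2 f(X)[V]X^\top)X+2\myskew(\nabla f(X)V^\top)X+2\myskew(\nabla f(X)X^\top)V .
\]
This is routine.

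\textbf{Step 2 (Christoffel term).} The Koszul formula for a metric $\ip{\xi}{M_Y\eta}$ gives
\[
\Gamma_X(V,W)=\tfrac12 M_X^{-1}\Big(\D M(X)[V]W+\D M(X)[W]V-\big(\D M(X)[\cdot\,]V\big)^{*}W\Big),
\]
where the last term is the Euclidean gradient of $Y\mapsto\ip{V}{M_Y W}$. We need:
\begin{itemize}
\item $M_X^{-1}(\zeta)=(I+P_X)\zeta\, X^\top X$, since $(I-\tfrac12P)^{-1}=I+P$ because $P$ is a projector;
\item $\D Q(X)[V]=-Q(V^\top X+X^\top V)Q$, which vanishes for tangent $V$ since $V^\top X+X^\top V=0$;
\item $\D P(X)[V]=VQX^\top+XQV^\top$ for tangent $V$.
\end{itemize}
Tangency of $V$ makes the $\D Q$ contributions vanish in the $V$-slot. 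We then substitute $W=G(X)$, using that $G(X)$ is itself tangent, i.e. $X^\top G+G^\top X=0$, to simplify. Collecting terms should yield $\Gamma_X(V,G(X))=-(I+P_X)\Xi_X(V)$ up to components killed by $\Pi^{\T}_X$, after absorbing the right factor $X^\top X$ against $Q_X$. The coefficients $\tfrac12$ and $\tfrac14$ in \eqref{eq:Xi_term} should emerge from the factor $\tfrac12$ in Koszul times the $\tfrac12$ in $M$.

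\textbf{Step 3.} Applying $\Pi^{\T}_X$ from \eqref{eq:projection_T} to the sum of Steps 1 and 2 gives \eqref{eq:Riemannian_Hessian}.

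The main obstacle is Step 2. Two issues need care: computing the adjoint term of the Koszul formula correctly, and tracking which pieces lie in the normal space $\{XQ_XS\}$ and can therefore be discarded or retained. Anything normal is annihilated by the projection, so we have freedom in how $\Xi_X$ is written.

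As a fallback, we can verify the formula directly. We would check that the stated expression is tangent and satisfies
\[
g_X(\Hess f(X)[V],W)=\D\big(g(\grad f,\bar W)\big)(X)[V]-g_X\big(\grad f(X),\bar\nabla_V\bar W\big)
\]
for tangent extensions $\bar W(Y)=W_0Y$ with $W_0$ skew.
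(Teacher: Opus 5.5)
Your proposal is correct and follows essentially the same route as the paper: $\Hess f(X)[V]=\Pi_X^{\mathrm{T}}\bigl(\mathrm{D}G(X)[V]+\Gamma_X(V,G(X))\bigr)$, where $\Gamma_X$ comes from the Koszul formula for $g_X(U,W)=\langle U,\mathcal{M}_X[W]\rangle$, tangency of $V$ and $G(X)$ removes the $\mathrm{D}Q_X$ terms, and $\mathcal{M}_X^{-1}[A]=(I+P_X)A\,X^\top X$ turns $\mathcal{M}_X[\Gamma_X(V,G(X))]=-\Xi_X(V)Q_X$ into $-(I+P_X)\Xi_X(V)$. The one small difference is that you use the full Christoffel symbol, whose third term needs $\mathrm{D}\mathcal{M}_X$ in normal directions, while the paper tests the Koszul identity only against tangent $W$; the extra terms you pick up have the form $XS$ with $S$ symmetric, which $\mathcal{M}_X^{-1}$ maps into the $g$-normal space and $\Pi_X^{\mathrm{T}}$ annihilates, so the two computations agree.
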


\begin{proof}
By the definition of the Riemannian Hessian~\cite[Def.~5.5.1]{absilOptimizationAlgorithmsMatrix2008a}, $\Hess f(X)[V]=\Pi_X^{\mathrm{T}}(\bar\nabla_V G)$, it suffices to compute the ambient Levi--Civita connection $\bar\nabla$ of the metric $g_X(U,W)=\ip{U}{\mathcal{M}_X[W]}$, where $\mathcal M_X[W]:=\Bigl(I-\frac12P_X\Bigr)WQ_X$. 

To derive the Levi--Civita connection, let $S_U := X^\top U + U^\top X$ for $U \in \R^{n\times p}$. Differentiating $Q_X$ and $P_X$ yields $\mathrm{D}Q_X[U] = -Q_XS_UQ_X$ and $\mathrm{D}P_X[U] = UQ_XX^\top + XQ_XU^\top - XQ_XS_UQ_XX^\top$. Therefore, 
\[\mathrm{D}\mathcal{M}_X[U]W = -\frac{1}{2}\mathrm{D}P_X[U]WQ_X - (I - \frac{1}{2}P_X)WQ_XS_UQ_X.\] For tangent vectors $U,Z,W \in T_X\mathrm{St}_{X^\top X}(p,n)$, the condition $S_U=S_Z=S_W=0$ simplifies this to $\mathrm{D}\mathcal{M}_X[U]W = -\frac{1}{2}(UQ_XX^\top + XQ_XU^\top)WQ_X$. Applying the Koszul formula~\cite[Eq.~5.11]{absilOptimizationAlgorithmsMatrix2008a} to constant ambient vector fields gives
\[
2g_X(\Gamma_X(U,Z),W) = \langle Z, \mathrm{D}\mathcal{M}_X[U]W \rangle + \langle U, \mathrm{D}\mathcal{M}_X[Z]W \rangle - \langle U, \mathrm{D}\mathcal{M}_X[W]Z \rangle.
\]
By trace rearrangement at $U=V$ and $Z =  G(X)$, we obtain  $\mathcal{M}_X[\allowbreak\Gamma_X(V,  G(X))]=-\Xi_X(V)Q_X$. It follows from $\mathcal{M}_X^{-1}[A] = (I+P_X)A(X^\top X)$ that $\Gamma_X(V,  G(X)) = \allowbreak -(I+P_X)\Xi_X(V)$. Finally, the connection formula $\bar\nabla_V  G = \mathrm{D}G(X)[V] + \Gamma_X(V, G(X))$ yields $\bar\nabla_VG = \mathrm{D}G(X)[V] - (I+P_X)\Xi_X(V)$.

It remains to compute the directional derivative $\mathrm{D}G(X)[V]$. Applying the product rule to $G(X) = 2\myskew(\nabla f(X)X^\top)X$ and substituting the resulting expression alongside $\bar\nabla_V G = \mathrm{D}G(X)[V] + \Gamma_X(V, G(X))$ into $\mathrm{Hess}f(X)[V] = \Pi_X^{\mathrm{T}}(\bar{\nabla}_V G)$ directly yields~\eqref{eq:Riemannian_Hessian}.
\end{proof}

We consider the Riemannian Newton (RN) equation on the layered manifold
\begin{equation}
\label{eq:newton_eq}
\Hess f(X)[T]=-\grad f(X),
\end{equation}
where $\grad f(X)$ and $\Hess f(X)$ are given in~\eqref{eq:Riemannian_gradient} and~\eqref{eq:Riemannian_Hessian}, respectively. The solution to this equation yields the RN update $T_{\mathrm{RN}}(X)$. Together with the Newton--Schulz normal update~\eqref{eq:normal_component}, it may be tempting to consider the iteration
\begin{equation}
\label{eq:Riemannian_newton_landing_field}
\begin{aligned}
\Lambda_{\mathrm{RN}}(X)&=T_{\mathrm{RN}}(X)+N(X),\\
X_{k+1}&=X_k+\Lambda_{\mathrm{RN}}(X_k).
\end{aligned}
\end{equation}
However, as shown by the empirical convergence behavior in~\cref{fig:RN_vs_N}, this iteration fails to achieve the anticipated local quadratic convergence, which motivates us to consider a modified Newton equation.

\subsection{A modified Newton equation}
\label{subsec:symmetric_tangent_system}
The empirical behavior of the naive update $\Lambda_{\mathrm{RN}}$ in~\eqref{eq:Riemannian_newton_landing_field} suggests that the plain Newton equation~\eqref{eq:newton_eq} is not fully adequate for the second-order landing framework. The first-order optimality of~\eqref{eq:problem} demands that $\grad f(X)=0$ and $X\in\St(p,n)$. However, the right-hand side~\eqref{eq:newton_eq} is not able to reflect the infeasibility of $X$ when $X\notin\St(p,n)$ and does not encode the normal component. 
To remedy this, recall~\eqref{eq:Riemannian_gradient} and~\eqref{eq:relative_grad}, namely 
\[
\grad f(X)=T_1(X) = 2\,\myskew(\nabla f(X)X^\top)X,
\]
and consider a movement along the normal space
\[T_1(X+N(X))\approx T_1(X)+\D T_1(X)[N(X)].\]
Note that $\D T_1(X)[N(X)]$ acts as a first-order perturbation of the Riemannian gradient induced by the normal component. Hence, $\D T_1(X)[N(X)]$ can play a role in compensating the RN equation~\eqref{eq:newton_eq} in the second-order landing framework. We start from this perturbation and develop a correction operator $\mathcal A_\N$ that maps onto the tangent space.

Specifically, given a normal vector $V\in\N_Y\mathrm{St}_{X^\top X}(p,n)$, we have 
\begin{equation}
    \label{eq:frechet_G}
    \D T_1(X)[V] = 2\,\myskew\bigl(\nabla^2 f(X)[V]X^\top + \nabla f(X)V^\top\bigr)X + 2\,\Omega(X)V,
\end{equation}
where $\Omega(X)$ is defined by
\begin{equation}
\label{eq:Omega_definition}
    \Omega(X):=\myskew(\nabla f(X)X^\top).
\end{equation}
It follows from~\eqref{eq:tangent_space} that the first term lies in the tangent space while the second term does not. By simply removing the second term, we construct the following correction operator 
\begin{equation}
\label{eq:A_N-def}
\mathcal A_{\N}(X)[V] := 2\,\myskew\!\bigl(\nabla^2 f(X)[V]X^\top + \nabla f(X)V^\top\bigr)X,
\end{equation}
in which the range is confined to the tangent space for all normal vectors $V$. 
For a reason that will become apparent in~\eqref{eq:Omega-bound-new}, $\mathcal{A}_{\N}(X)[N(X)]$ captures the first-order perturbation of the Riemannian gradient by the normal component $N(X)$ within the tangent space.

By incorporating the normal component into the RN equation~\eqref{eq:newton_eq}, we obtain a modified Newton equation in the unknown $T\in\T_X\mathrm{St}_{X^\top X}(p,n)$:
\begin{equation}\label{eq:SOL-sym}
    \Hess f(X)[T] = -\grad f(X)-\mathcal A_{\N}(X)[N(X)].
\end{equation}
It is worth noting that~\eqref{eq:SOL-sym} remains a valid equation on the tangent space of the layered manifold~$\mathrm{St}_{X^\top X}(p,n)$ since both sides belong to the tangent space. The solution to this linear system provides a candidate tangent component $T(X)\in\T_X\mathrm{St}_{X^\top X}(p,n)$ for the second-order landing framework~\eqref{eq:second_order_landing_framework}, which yields a locally quadratically convergent method; see SOL-sym in~\cref{alg:second-order_landing}.

In practice, solving~\eqref{eq:SOL-sym} requires computing the Riemannian Hessian~\eqref{eq:Riemannian_Hessian}, which involves projection and matrix inversions through $(X^\top X)^{-1}$. The next subsection unveils a computationally efficient approximation to~\eqref{eq:SOL-sym}, which is projection-free and inverse-free.

\subsection{An approximation to the modified Newton equation}
\label{subsec:corrected_tan_direction}
To obtain a more scalable implementation, we derive an approximation to the modified Newton equation~\eqref{eq:SOL-sym} by eliminating the projection and matrix inversions involved in the Riemannian Hessian~\eqref{eq:Riemannian_Hessian}.

A careful inspection of~\eqref{eq:Riemannian_Hessian} reveals that for all $V\in\T_X\mathrm{St}_{X^\top X}(p,n)$, the 
term
\begin{equation}
\label{eq:A_T-def}
\mathcal A_{\T}(X)[V]
:=
2\,\myskew\!\bigl(\nabla^2 f(X)[V]X^\top
+ \nabla f(X)V^\top\bigr)X
\end{equation}
belongs to the tangent space~\eqref{eq:tangent_space}, and thus the projection $\Pi_X^{\mathrm{T}}$ leaves $\mathcal{A}_\T(X)$ unchanged. Note that both expressions of $\mathcal A_\T(X)$ and $\mathcal A_\N(X)$ are the first term of $\D T_1(X)$ in~\eqref{eq:frechet_G} but with different domain $\T_Y\mathrm{St}_{X^\top X}(p,n)$ and $\N_Y\mathrm{St}_{X^\top X}(p,n)$, and we make this distinction intentionally.   Consequently, the Riemannian Hessian~\eqref{eq:Riemannian_Hessian} reduces to
\begin{equation*}
\Hess f(X)[V]
= \mathcal A_\T(X)[V] + \Pi_X^{\mathrm{T}}\Big(
2\,\myskew(\nabla f(X)X^\top)V
-(I+P_X)\,\Xi_X(V)
\Big).
\end{equation*}
While $\mathcal{A}_\T(X)$ is straightforward to compute, the remaining component involves the projection $\Pi_X^{\mathrm{T}}$ and matrix inversion through~\eqref{eq:projection_T} and~\eqref{eq:Xi_term}. For computational efficiency, we remove the remaining component and approximate $\Hess f(X)$ by the tangent operator $\mathcal A_\T(X)$, i.e.,
\[
\Hess f(X)[V]\approx \mathcal A_\T(X)[V],
\]
which preserves the second-order information of the objective while avoiding both explicit projections and matrix inversions.

Subsequently, we integrate the Riemannian Hessian approximation $A_\T(X)$ into the modified Newton equation~\eqref{eq:SOL-sym}, and propose an approximate Newton equation in the unknown $T\in\T_X\mathrm{St}_{X^\top X}(p,n)$:
\begin{equation}
\label{eq:computable-tangent-system}
\mathcal{A}_{\T}(X)[T] = -\grad f(X)-\mathcal A_\N(X)[N(X)].
\end{equation}
The operators $\mathcal{A}_{\T}$ and $\mathcal{A}_\N$ are given in~\eqref{eq:A_T-def} and~\eqref{eq:A_N-def}, respectively. The Riemannian gradient ($\grad f$) can be computed from~\eqref{eq:Riemannian_gradient} and a normal component $N(X)$ can be selected from~\cref{sec:design_normal_term}. We have thus obtained a projection-free and inverse-free non-symmetric equation. We employ the solution to this equation as a tangent component $T(X)$ in the second-order landing framework~\eqref{eq:second_order_landing_framework}, yielding another locally quadratically convergent method; see SOL in~\cref{alg:second-order_landing}.

\section{The second-order landing (SOL) method}
\label{sec:SOL_methods}
With the normal and tangent components developed in~\cref{sec:design_normal_term} and~\cref{sec:design_tangent_term}, we concretize the second-order landing framework~\eqref{eq:second_order_landing_framework}, in which the approximate Newton equation~\eqref{eq:computable-tangent-system} or the modified Newton equation~\eqref{eq:SOL-sym} is solved exactly, and then we introduce an inexact variant for practical implementation.  The convergence analysis establishes a local quadratic convergence and a superlinear convergence for the inexact variant.

\subsection{SOL and SOL-sym methods}
\label{subsec:sol_algorithm}
Now we have all the ingredients to fulfill the second-order landing framework~\eqref{eq:second_order_landing_framework}. Given an iterate $X_k$, we consider the second-order landing update
\begin{equation}\label{eq:second_order_landing_field}
    \Lambda(X_k) := T(X_k) + N(X_k),
\end{equation}
where $T(X_k)$ is obtained by solving~\eqref{eq:computable-tangent-system} or~\eqref{eq:SOL-sym}, and 
$N(X_k) = N_1(X_k)$ is the order-$1$ Newton--Schulz update given by~\eqref{eq:normal_component}. 

To ensure that the second-order landing iteration 
\[X_{k+1} = X_k + \eta_k \Lambda(X_k)\]
remains within a local safe region $\St(p,n)^\varepsilon$, we adopt a safeguard step-size rule.
A general safeguard for landing-type methods was given in~\cite[Lem.~3]{ablinInfeasibleDeterministicStochastic2024}; the following lemma tailors this safeguard for the SOL and SOL-sym methods.

\begin{lemma}\label{prop:safe_step_size_NL}
Let $X_k\in \St(p,n)^\varepsilon$ and consider the update
$X_{k+1}=X_k+\eta_k\Lambda(X_k)$, where $\Lambda(X_k)$ is defined by
\eqref{eq:second_order_landing_field} and $\eta_k>0$ is the step size. Let
$g:=\normF{\Lambda(X_k)}$ and $d:=\normF{X_k^\top X^{}_k-I_p }$. If
$\eta_k \leq \eta_\mathrm{safe}(X_k)$ with
\begin{equation}
    \label{eq:safe_stepsize}
    \eta_\mathrm{safe}(X_k):=\min\left\{\frac{\frac12 d(1-d)+\sqrt{\frac14d^2(1-d)^2+g^2(\varepsilon-d)}}{g^2},1\right\},
\end{equation}
then $X_{k+1}\in \St(p,n)^\varepsilon$.
\end{lemma}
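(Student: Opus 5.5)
The plan is to compute $X_{k+1}^\top X_{k+1}-I_p$ exactly as a quadratic polynomial in $\eta_k$, bound its Frobenius norm, and read off the admissible step sizes from a scalar quadratic inequality.

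\emph{Step 1: expand the new infeasibility.} Write $E:=X_k^\top X_k-I_p$, $\Lambda:=\Lambda(X_k)=T+N$ and $\eta:=\eta_k$. Then
\[
X_{k+1}^\top X_{k+1}-I_p=E+\eta\bigl(X_k^\top\Lambda+\Lambda^\top X_k\bigr)+\eta^2\Lambda^\top\Lambda .
\]

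\emph{Step 2: evaluate the first-order term.} This is where the geometry enters.
\begin{itemize}
\item Since $T\in\T_{X_k}\St_{X_k^\top X_k}(p,n)$, the characterization~\eqref{eq:tangent_space} gives $X_k^\top T+T^\top X_k=0$. This holds whichever equation, \eqref{eq:computable-tangent-system} or \eqref{eq:SOL-sym}, defines $T$.
\item For the normal part $N=-\tfrac12X_kE$ from~\eqref{eq:normal_component}, using $X_k^\top X_k=I_p+E$ and the fact that $E$ commutes with $I_p+E$, we get
\[
X_k^\top N+N^\top X_k=-\tfrac12\bigl((I_p+E)E+E(I_p+E)\bigr)=-E-E^2 .
\]
\end{itemize}
Substituting both into Step 1 yields
\[
X_{k+1}^\top X_{k+1}-I_p=(1-\eta)E-\eta E^2+\eta^2\Lambda^\top\Lambda .
\]

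\emph{Step 3: bound the norm.} For $0<\eta\le1$ (guaranteed by the $\min\{\cdot,1\}$ in~\eqref{eq:safe_stepsize}), apply the triangle inequality together with $\normF{E^2}\le\normF{E}^2$ and $\normF{\Lambda^\top\Lambda}\le\normF{\Lambda}^2$:
\[
\normF{X_{k+1}^\top X_{k+1}-I_p}\le(1-\eta)d+\eta d^2+\eta^2g^2=d-\eta\,d(1-d)+\eta^2g^2 .
\]
Hence $X_{k+1}\in\St(p,n)^\varepsilon$ holds whenever
\[
\phi(\eta):=g^2\eta^2-d(1-d)\eta+(d-\varepsilon)\le0 .
\]

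\emph{Step 4: solve the scalar inequality.}
\begin{itemize}
\item If $g=0$, the bound reduces to $d-\eta\,d(1-d)\le d\le\varepsilon$, which is the trivial case; the formula~\eqref{eq:safe_stepsize} should then be read as $\eta_\mathrm{safe}=1$.
\item If $g>0$, then $\phi$ is a convex quadratic. Its constant term satisfies $\phi(0)=d-\varepsilon\le0$ because $X_k\in\St(p,n)^\varepsilon$. So $\phi$ has real roots whose product is nonpositive, and $\phi\le0$ on the whole interval from $0$ to its larger root.
\item By the quadratic formula, that larger root is
\[
\frac{\tfrac12d(1-d)+\sqrt{\tfrac14d^2(1-d)^2+g^2(\varepsilon-d)}}{g^2},
\]
which is exactly the first entry of the minimum in~\eqref{eq:safe_stepsize}.
\item Since $d<1$, every $\eta\le\eta_\mathrm{safe}(X_k)$ therefore satisfies both $\eta\le1$ and $\phi(\eta)\le0$, which concludes the proof.
\end{itemize}

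The main obstacle is Step 2. The cancellation of the tangent contribution and the exact identity $-E-E^2$ for the normal contribution are what produce the clean quadratic; after that the argument is elementary.
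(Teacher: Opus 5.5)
Your proof is correct and follows the same route as the paper, whose one-line proof cites \cite[Lem.~3]{ablinInfeasibleDeterministicStochastic2024} with $\lambda=1/2$; that lemma is proved by the same expansion $(1-\eta)E-\eta E^2+\eta^2\Lambda^\top\Lambda$, the same triangle-inequality bound for $\eta\le 1$, and the same larger root of the scalar quadratic. Your Step~2 also makes explicit why the citation applies to SOL: the cited lemma only needs the tangent part to satisfy $X_k^\top T+T^\top X_k=0$, and the SOL tangent component has this property whichever Newton equation defines it.
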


\begin{proof}
This is~\cite[Lem.~3]{ablinInfeasibleDeterministicStochastic2024} specialized to $\lambda=1/2$.
\end{proof}

\begin{algorithm}[htbp]
\caption{Second-order landing method (SOL and SOL-sym) }
\label{alg:second-order_landing}
\begin{algorithmic}[1]
  \REQUIRE Initial point $X_0 \in \St(p,n)^\varepsilon$,
           maximum number of iterations \texttt{mxit},
           stopping tolerance $\texttt{tol}>0$. Set $k = 0$.
  \WHILE{$\normF{\grad f(X_k)} + \normF{ N(X_k)} > \texttt{tol}$ and $k < \texttt{mxit}$}
      \STATE Compute the order-$1$ Newton--Schulz update
           $ N(X_k) = N_1(X_k)$, see~\eqref{eq:normal_component}.
    \STATE Compute $T(X_k)$ by solving~\eqref{eq:computable-tangent-system} (or~\eqref{eq:SOL-sym} for SOL-sym).    \label{alg:line_tangent}
    \STATE Compute $\widehat{X}_{k+1} = X_k+\Lambda(X_k)$ with $\Lambda(X_k)=T(X_k)+N(X_k)$.
\IF{$\normF{\widehat{X}_{k+1}^\top\widehat{X}_{k+1}-I_p}\leq\varepsilon$}
    \STATE Accept the unit step: $X_{k+1} = \widehat{X}_{k+1}$.
\ELSE
    \STATE Choose $\eta_k\le \eta_\mathrm{safe}(X_k)$ from~\eqref{eq:safe_stepsize} and take a safe step: $X_{k+1} = X_k+\eta_k\Lambda(X_k)$.
\ENDIF
    \STATE Set $k=k+1$.
  \ENDWHILE
  \ENSURE $X_k$.
\end{algorithmic}
\end{algorithm}

The resulting second-order landing method is summarized in~\cref{alg:second-order_landing}. Unless otherwise stated, $T(X_k)$ is computed exactly. An inexact variant is proposed in~\cref{subsec:inexact_sol_algorithm}, which enjoys superlinear convergence. Specifically, starting from an initial point in the safe region, each iteration first constructs the normal component $N(X_k)$ from the order-$1$ Newton--Schulz iteration and computes
the tangent component $T(X_k)$ by solving either the approximate Newton equation~\eqref{eq:computable-tangent-system} for SOL, or the modified Newton equation~\eqref{eq:SOL-sym} for SOL-sym. The suffix ``sym'' emphasizes that SOL-sym exploits the symmetry of the complete Riemannian Hessian operator $\Hess f(X_k)$ with respect to the metric $g$.
After forming the second-order landing update $\Lambda(X_k)=T(X_k)+N(X_k)$, the algorithm tries a unit step $X_k+\Lambda(X_k)$. If it falls outside the safe region $\St(p,n)^\varepsilon$, a safeguarded step size is computed via~\cref{prop:safe_step_size_NL}. It is worth noting that the safeguard mechanism becomes inactive near the solution. The algorithm terminates when the first-order optimality is achieved within the prescribed tolerance.

\subsection{Local quadratic convergence of the SOL method}
\label{section:convergence_analysis}

We now analyze the local convergence of SOL (\cref{alg:second-order_landing}). As shown in~\cref{section:Pre}, the zeros of the first-order landing field $\Lambda_1(X)=T_1(X) + \lambda \nabla \mathcal{N}(X)$ are the first-order stationary points of~\eqref{eq:problem}. This motivates a Newton-type local convergence analysis through the lens of the equation 
\begin{equation}
    \label{eq:analysis_equation}
    \Lambda_1(X)=0.
\end{equation}
Specifically, we show that $\Lambda(X)$ in SOL can be viewed as an approximate Newton step for solving~\eqref{eq:analysis_equation} in a neighborhood of a nondegenerate minimizer.

We begin by stating the smoothness and nondegeneracy assumptions used in the local analysis.

\begin{assumption}\label{asm:smoothness}
We assume that:
\\
(i) $f$ is twice continuously differentiable and $L_0$-Lipschitz in $\St(p,n)^\varepsilon$.
\\
(ii) The Euclidean gradient $\nabla f$ is $L_1$-Lipschitz continuous in $\St(p,n)^\varepsilon$.
\\
(iii) The Euclidean Hessian $\nabla^2 f$ is locally $L_2$-Lipschitz continuous at $X_\star$, i.e., 
  there exist $L_2 > 0$ and $\delta_0 > 0$ such that
  \[
    \|\nabla^2 f(X) - \nabla^2 f(Y)\|
    \le L_2 \normF{X - Y},
    \qquad \text{for all } X,Y \in B_{\delta_0}(X_\star),
  \]
  where $B_r(X_\star)$ denotes the open ball of radius $r$ centered at $X_\star$.
\end{assumption}

For the local analysis, we assume that the target local minimizer satisfies the second-order sufficient optimality condition.

\begin{assumption}[nondegenerate local minimizer]\label{asm:regularity}
Let $X_\star\in \St(p,n)$ be a local minimizer of~\eqref{eq:problem}.
Assume that $X_\star$ is a nondegenerate local minimizer, i.e., the Riemannian Hessian defined in~\eqref{eq:Riemannian_Hessian} at $X_\star$ is
positive definite on the tangent space.
\end{assumption}

For subsequent proofs, it is useful to establish basic bounds on iterates within the safe region.

\begin{lemma}
\label{lem:bounds}
For all $X\in\mathrm{St}(p,n)^{\varepsilon}$, the following bounds hold:
\[
\|X\|_{2}\leq\sqrt{1+\varepsilon},\quad\|(X^{\top}X)^{-1}\|_{2}\leq\frac{1}{1-\varepsilon}.
\]
\end{lemma}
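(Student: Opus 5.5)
The plan is to reduce both bounds to eigenvalue statements about the symmetric positive semidefinite matrix $X^\top X$, and then use the single hypothesis $\normF{X^\top X-I_p}\le\varepsilon$ from \cref{def:safety_region}.

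First, we compare the Frobenius and spectral norms. Since the spectral norm never exceeds the Frobenius norm, the hypothesis gives $\|X^\top X-I_p\|_2\le\varepsilon$. Because $X^\top X-I_p$ is symmetric, its spectral norm is the largest absolute value of its eigenvalues. Its eigenvalues are $\sigma_i(X)^2-1$, where $\sigma_i(X)$ are the singular values of $X$. Hence every eigenvalue of $X^\top X$ lies in the interval $[1-\varepsilon,1+\varepsilon]$.

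Second, we read off the two bounds from this interval.
\begin{itemize}
\item For the first bound, $\|X\|_2^2=\|X^\top X\|_2=\sigma_1(X)^2\le 1+\varepsilon$, so $\|X\|_2\le\sqrt{1+\varepsilon}$.
\item For the second bound, note that $\varepsilon<1$ gives $\sigma_{\min}(X)^2\ge 1-\varepsilon>0$. So $X^\top X$ is invertible, and $X$ has full column rank, consistent with the requirement $X\in\R^{n\times p}_*$ used elsewhere in the paper. Since $X^\top X$ is symmetric positive definite, $\|(X^\top X)^{-1}\|_2=1/\lambda_{\min}(X^\top X)\le 1/(1-\varepsilon)$.
\end{itemize}

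There is no real obstacle here. The only point needing care is the first step: the norm comparison $\|\cdot\|_2\le\normF{\cdot}$ must be applied before the eigenvalue argument, so that the Frobenius-norm hypothesis yields the pointwise eigenvalue bound.
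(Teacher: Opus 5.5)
Your proof is correct and takes essentially the same route as the paper: both arguments establish the singular-value sandwich $\sqrt{1-\varepsilon}\le\sigma_{\min}(X)\le\sigma_1(X)\le\sqrt{1+\varepsilon}$ and read off the two bounds from it. The only difference is that the paper imports this sandwich from Lemma~1 of Ablin et al., whereas you derive it directly from $\|X^\top X-I_p\|_2\le\normF{X^\top X-I_p}\le\varepsilon$ together with the fact that the eigenvalues of $X^\top X-I_p$ are $\sigma_i(X)^2-1$.
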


\begin{proof}
By~\cite[Lem.~1]{ablinInfeasibleDeterministicStochastic2024}, any $X\in \St(p,n)^\varepsilon$ satisfies
$\sqrt{1-\varepsilon}\ \le\ \sigma_{\min}(X)\ \le\ \sigma_{1}(X)\ \le\ \sqrt{1+\varepsilon}.$
Therefore, $\|X\|_2=\sigma_{1}(X)\le \sqrt{1+\varepsilon}$. Moreover, since the singular values of $X^\top X$ are $\{\sigma_i(X)^2\}$, we have
$\sigma_{\min}(X^\top X)=\sigma_{\min}(X)^2\ge 1-\varepsilon.$
Hence $X^\top X$ is invertible and
$\|(X^\top X)^{-1}\|_2=\frac{1}{\sigma_{\min}(X^\top X)}\le \frac{1}{1-\varepsilon}.$
\end{proof}

In order to analyze~\eqref{eq:analysis_equation}, we explicitly derive the Jacobian $\mathcal{J}_{\Lambda_1}(X)$ of the landing field $\Lambda_1(X)$. Since the SOL method splits the update into tangent and normal components, we analyze the Jacobian structure with respect to the direct sum decomposition \[\mathbb{R}^{n\times p}=\T_{X}\StXTX(p,n)\oplus \N_{X}\StXTX(p,n).\]

\begin{proposition}[Jacobian of the landing field $\Lambda_1(X)$]
\label{prop:block-jac}
Let $\Pi_X^{\mathrm{T}}$ be the $g$-orthogonal projection from $\R^{n\times p}$ onto \(\T_X\StXTX(p,n)\) given in~\eqref{eq:projection_T}, and define the normal projection onto $\N_X\StXTX(p,n)$ as $\Pi_X^{\mathrm{N}}:=\Id-\Pi_X^{\mathrm{T}}$, where $\Id$ is the identity map.
Then the Jacobian $\mathcal{J}_{\Lambda_1}(X)$ of the landing field
$\Lambda_1(X)=2\,\mathrm{skew}(\nabla f(X)X^\top)X+\lambda\,\nabla\mathcal N(X)$
admits the following block operator representation,
\[
\mathcal{J}_{\Lambda_1}(X)=
\begin{bmatrix}
 J_{\T\T}(X) & J_{\N\T}(X)\\
 J_{\T\N}(X) & J_{\N\N}(X)
\end{bmatrix}
\]
with the block actions, for all $V=V_\T+V_\N\in\R^{n \times p}$ where
$V_\T\in \T_X\StXTX(p,n)$ and $V_\N\in \N_X\StXTX(p,n)$:
\begin{align*}
J_{\T\T}(X)[V_\T]={}&
2\mathrm{skew}\big(\nabla^2 f(X)[V_\T]X^\top + \nabla f(X)V_\T^\top\big)X
\\
\notag
&+\Pi_X^{\mathrm{T}}\big(2\,\mathrm{skew}(\nabla f(X) X^\top)V_\T + \lambda\,V_\T\,(X^\top X-I_p)\big),
\\
J_{\T\N}(X)[V_\T]={}&
\Pi_X^{\mathrm{N}}\!\Big(2\mathrm{skew}(\nabla f(X) X^\top)\,V_\T
\;+\;\lambda\,V_\T\,(X^\top X-I_p)\Big),
\\
J_{\N\T}(X)[V_\N]
={}&
2\mathrm{skew}\!\big(\nabla^2 f(X)[V_\N]X^\top + \nabla f(X)V_\N^\top\big)X 
\\\notag &+\Pi_X^{\mathrm{T}}\!\big(
2\mathrm{skew}(\nabla f(X)X^\top)V_\N+\lambda V_\N(X^\top X-I_p)\big)
\\
\notag
&+\Pi_X^{\mathrm{T}}\!\Big( 2\lambda X\,\mathrm{sym}(X^\top V_\N)
\Big),
\\
J_{\N\N}(X)[V_\N]
={}&
\Pi_X^{\mathrm{N}}\!\Big(
2\,\mathrm{skew}(\nabla f(X)X^\top)V_\N +\lambda V_\N(X^\top X-I_p)\Big)
\\\notag
& + \Pi_X^{\mathrm{N}}\!\Big(2\lambda X\,\mathrm{sym}(X^\top V_\N)
\Big).
\end{align*}

\end{proposition}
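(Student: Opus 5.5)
The plan is to compute the Fréchet derivative $\D\Lambda_1(X)[V]$ directly and then split it with the projections $\Pi_X^{\mathrm{T}}$ and $\Pi_X^{\mathrm{N}}$. Linearity in $V$ then lets us treat $V_\T$ and $V_\N$ separately.

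First, by the product rule applied to $T_1(X)=2\,\myskew(\nabla f(X)X^\top)X$, exactly as in~\eqref{eq:frechet_G},
\[
\D T_1(X)[V]=2\,\myskew\bigl(\nabla^2 f(X)[V]X^\top+\nabla f(X)V^\top\bigr)X+2\,\Omega(X)V .
\]
This holds for arbitrary $V$, not only normal ones. For $\nabla\mathcal N(X)=X(X^\top X-I_p)$ we get
\[
\D\nabla\mathcal N(X)[V]=V(X^\top X-I_p)+X(V^\top X+X^\top V)=V(X^\top X-I_p)+2X\,\sym(X^\top V).
\]
Hence
\[
\D\Lambda_1(X)[V]=2\,\myskew(\cdots)X+2\Omega(X)V+\lambda V(X^\top X-I_p)+2\lambda X\sym(X^\top V).
\]

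Second, I would classify each term with respect to the decomposition. The first term $2\,\myskew(\cdots)X$ has the form $WX$ with $W$ skew, so by~\eqref{eq:tangent_space} it is tangent for every $V$. It is therefore unchanged by $\Pi_X^{\mathrm{T}}$ and annihilated by $\Pi_X^{\mathrm{N}}$. This is why it appears without a projection in $J_{\T\T}$ and $J_{\N\T}$ and is absent from $J_{\T\N}$ and $J_{\N\N}$. The terms $2\Omega(X)V$ and $\lambda V(X^\top X-I_p)$ have no definite type, so they are kept under both projections.

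For the last term, $V_\T$ tangent means $X^\top V_\T+V_\T^\top X=0$, i.e.\ $\sym(X^\top V_\T)=0$. So $2\lambda X\sym(X^\top V)$ vanishes on the tangent block and survives only on $V_\N$. On $V_\N$ it is itself normal, since $X\,S=X(X^\top X)^{-1}(X^\top X S)$ is not obviously in the form~\eqref{eq:normal_space}. The statement keeps it under both projections anyway, so I need not decide. Indeed, $\Pi_X^{\mathrm{T}}(XS)=XS-XQ_X\sym(X^\top XS)$ need not vanish in general, since $X^\top X S$ is not symmetric. This is consistent with the stated $J_{\N\T}$ containing $\Pi_X^{\mathrm{T}}(2\lambda X\sym(X^\top V_\N))$.

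Finally, I would assemble the blocks by writing $\D\Lambda_1(X)[V]=\Pi_X^{\mathrm{T}}\D\Lambda_1(X)[V]+\Pi_X^{\mathrm{N}}\D\Lambda_1(X)[V]$ with $V=V_\T+V_\N$. The entry $J_{ab}$ maps the $a$-component of the input to the $b$-component of the output, consistent with the labelling in the statement. The main point requiring care is verifying that $\Pi_X^{\mathrm{T}}+\Pi_X^{\mathrm{N}}=\Id$ is a genuine direct-sum splitting. This is immediate from the definition $\Pi_X^{\mathrm{N}}:=\Id-\Pi_X^{\mathrm{T}}$ together with $\Pi_X^{\mathrm{T}}$ being the projection~\eqref{eq:projection_T}. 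I would also check the bookkeeping of which terms vanish under each projection. Everything else is routine differentiation.
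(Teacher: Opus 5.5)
Your proposal is correct and follows essentially the same route as the paper: differentiate $\Lambda_1$ to obtain the full Jacobian~\eqref{eq:fullJac}, note that the $2\,\myskew(\cdot)X$ term is tangent and that $\sym(X^\top V_\T)=0$ removes the last term on tangent inputs, then split the remaining terms with $\Pi_X^{\mathrm{T}}$ and $\Pi_X^{\mathrm{N}}$. One slip: your claim that $2\lambda X\sym(X^\top V_\N)$ ``is itself normal'' is false in general, since under $g$ a matrix $XS$ with $S$ symmetric is normal only when $S$ commutes with $X^\top X$ (as your next sentence observes), but it is harmless because the statement keeps that term under both projections.
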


\begin{proof}
    Differentiating $\Lambda_1$ at $X$ along $V$ yields the Jacobian,
    \begin{equation}\label{eq:fullJac}
    \begin{aligned}
        \mathcal{J}_{\Lambda_1}(X)[V] =\;& 
        \underbrace{2\,\mathrm{skew}\big(\nabla^2 f(X)[V]X^\top + \nabla f(X)V^\top\big)X}_{\in \T_X\StXTX(p,n)} 
        \;+\; 2\,\mathrm{skew}(\nabla f(X)X^\top)V \\
        & +\; \lambda V(X^\top X - I_p) \;+\; 2\lambda X\,\mathrm{sym}(X^\top V).
    \end{aligned}
    \end{equation}
    We derive the block components by restricting $V$ to the tangent space $\T_X\StXTX(p,n)$ 
    or the normal space $\N_X\StXTX(p,n)$ and applying the $g$-orthogonal projections $\Pi_X^{\mathrm{T}}$ and $\Pi_X^{\mathrm{N}}$.
    
    For a tangent vector $V_\T \in \T_X\StXTX(p,n)$, the property $\mathrm{sym}(X^\top V_\T) = 0$ eliminates the last term in~\eqref{eq:fullJac}. The first term lies naturally in $\T_X\StXTX(p,n)$ (as it is of the form $KX$ with $K$ skew-symmetric); it is preserved by $\Pi_X^{\mathrm{T}}$ and canceled by $\Pi_X^{\mathrm{N}}$. Applying the projections to the remaining terms directly yields $J_{\T\T}$ and $J_{\T\N}$.
    
    For a normal vector $V_\N \in \N_X\StXTX(p,n)$, the first term in~\eqref{eq:fullJac}
    still lies in $\T_X\StXTX(p,n)$.
    The remaining terms are, in general, neither purely tangent nor purely normal.
    Therefore, they must be decomposed by applying the projections $\Pi_X^{\mathrm{T}}$ and $\Pi_X^{\mathrm{N}}$.
    This yields the expressions for $J_{\N\T}$ and $J_{\N\N}$.
\end{proof}

The Jacobian can be simplified at a minimizer as follows.

\begin{corollary}[Jacobian at stationary points]
\label{cor:block-upper-Jac}
Let $X_\star$ be a stationary point for problem~\eqref{eq:problem}. Then the Jacobian $\mathcal{J}_{\Lambda_1}(X_\star)$ admits a block upper-triangular structure with respect to the direct sum decomposition $\mathbb R^{n\times p}=\T_{X_\star}\St(p,n)\oplus \N_{X_\star}\St(p,n)$:
\[
\mathcal{J}_{\Lambda_1}(X_\star)
=
\begin{bmatrix}
J_{\T\T}(X_\star) & J_{\N\T}(X_\star) \\
\mathbf{0} & J_{\N\N}(X_\star)
\end{bmatrix}.
\]
Specifically, for all $V_\T\in \T_{X_\star}\St(p,n)$ and $V_\N\in \N_{X_\star}\St(p,n)$, the nonzero blocks simplify to
\begin{equation*}
    \begin{aligned}
    J_{\T\T}(X_\star)[V_\T] &= 2\,\mathrm{skew}\big(\nabla^2 f(X_\star)[V_\T]\,X_\star^\top + \nabla f(X_\star)\,V_\T^\top\big)\,X_\star = \Hess f(X_\star)[V_\T],
    \\
    J_{\N\T}(X_\star)[V_\N] &= 2\,\mathrm{skew}\big(\nabla^2 f(X_\star)[V_\N]\,X_\star^\top + \nabla f(X_\star)\,V_\N^\top\big)\,X_\star,
    \\
    J_{\N\N}(X_\star)[V_\N] &= 2\lambda\,V_\N.
    \end{aligned}
\end{equation*}
\end{corollary}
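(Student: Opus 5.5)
We specialize the block formulas of \cref{prop:block-jac} at $X=X_\star$, using two facts: $X_\star^\top X_\star=I_p$, and stationarity, i.e.\ $\grad f(X_\star)=2\,\myskew(\nabla f(X_\star)X_\star^\top)X_\star=0$. The key preliminary step is to upgrade stationarity to $\Omega(X_\star)=\myskew(\nabla f(X_\star)X_\star^\top)=0$, not merely $\Omega(X_\star)X_\star=0$.

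For this, write $\Omega:=\Omega(X_\star)$ and decompose $\R^n=\mathrm{range}(X_\star)\oplus\mathrm{range}(X_\star)^\perp$. From $\Omega X_\star=0$ and skew-symmetry, $X_\star^\top\Omega=-(\Omega X_\star)^\top=0$. Hence $\Omega=(I-X_\star X_\star^\top)\Omega(I-X_\star X_\star^\top)$. Let $G=\nabla f(X_\star)$. Then $\Omega=\frac12(GX_\star^\top-X_\star G^\top)$, and each term has a factor $X_\star^\top$ on the right or $X_\star$ on the left. Therefore
\[
(I-X_\star X_\star^\top)\,\Omega\,(I-X_\star X_\star^\top)=0,
\]
so $\Omega=0$.

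With $\Omega(X_\star)=0$ and $X_\star^\top X_\star-I_p=0$, the terms $2\,\myskew(\nabla f X^\top)V$ and $\lambda V(X^\top X-I_p)$ vanish in every block. This gives the four blocks directly.

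\begin{itemize}
\item[(a)] $J_{\T\N}(X_\star)=0$, since it consists only of these two terms.
\item[(b)] $J_{\T\T}(X_\star)$ reduces to $\mathcal A_\T(X_\star)$.
\item[(c)] $J_{\N\T}(X_\star)$ reduces to $\mathcal A_\N(X_\star)$ plus $\Pi^{\T}_{X_\star}(2\lambda X_\star\,\sym(X_\star^\top V_\N))$.
\item[(d)] $J_{\N\N}(X_\star)=\Pi^{\N}_{X_\star}(2\lambda X_\star\,\sym(X_\star^\top V_\N))$.
\end{itemize}

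For $V_\N\in\N_{X_\star}\St(p,n)$, \eqref{eq:normal_space} with $X^\top X=I_p$ gives $V_\N=X_\star S$ with $S$ symmetric. Then $\sym(X_\star^\top V_\N)=S$, so $2\lambda X_\star\,\sym(X_\star^\top V_\N)=2\lambda V_\N$, which is normal. Consequently its tangent projection vanishes, which completes (c). Its normal projection equals $2\lambda V_\N$, which gives (d).

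It remains to identify $J_{\T\T}(X_\star)$ with $\Hess f(X_\star)$. In \eqref{eq:Riemannian_Hessian}, the third term $2\,\myskew(\nabla f X_\star^\top)V$ vanishes because $\Omega(X_\star)=0$. Further, $G(X_\star)=\grad f(X_\star)=0$, so $\Xi_{X_\star}(V)=0$ by \eqref{eq:Xi_term}. The remaining terms form $\mathcal A_\T(X_\star)[V_\T]$, which is tangent, so the projection leaves it unchanged, giving $\Hess f(X_\star)[V_\T]=J_{\T\T}(X_\star)[V_\T]$.

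The main subtlety is the first step: the definition of stationarity only says $\Omega X_\star=0$, while the simplifications need $\Omega=0$ itself. That step is dispatched by the range/orthogonal-complement argument above; everything else is bookkeeping.
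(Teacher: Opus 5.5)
Your proof is correct and follows the same route as the paper: substitute $X_\star^\top X_\star=I_p$ and $\myskew(\nabla f(X_\star)X_\star^\top)=0$ into the blocks of \cref{prop:block-jac}, then use $V_\N=X_\star S$ to get $2\lambda X_\star\sym(X_\star^\top V_\N)=2\lambda V_\N$. Your extra steps are valid and fill in details the paper leaves implicit: deducing $\Omega(X_\star)=0$ from $\Omega(X_\star)X_\star=0$ (the paper simply asserts it as part of stationarity), checking that the $\Pi^{\T}_{X_\star}$ term in $J_{\N\T}$ vanishes, and verifying $J_{\T\T}(X_\star)=\Hess f(X_\star)$ via $\Xi_{X_\star}=0$.
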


\begin{proof}
Since $X_\star$ is a stationary point, we have
\[
X_\star^\top X_\star=I_p,
\qquad
\myskew(\nabla f(X_\star)X_\star^\top)=0.
\]
Substituting them into $\mathcal{J}_{\Lambda_1}(X_\star)$ of \cref{prop:block-jac} gives
$J_{\T\N}(X_\star)=0$ and yields $J_{\T\T}(X_\star)$ and $J_{\N\T}(X_\star)$.

It remains to simplify $J_{\N\N}(X_\star)$. For $V_\N\in\N_{X_\star}\St(p,n)$, write
$V_\N=X_\star S$ with $S\in\mathcal{S}_{\sym}(p)$. Then
\[
2\lambda X_\star\sym(V_\N^\top X_\star)
=
2\lambda X_\star\sym(SX_\star^\top X_\star)
=
2\lambda X_\star S
=
2\lambda V_\N.
\]
Since $V_\N$ already lies in the normal space, applying $\Pi_\N^{X_\star}$ leaves it unchanged, and we obtain $J_{\N\N}(X_\star)$.
\end{proof}

We also establish the Lipschitz continuity of the Jacobian, which is essential for quadratic convergence analysis.

\begin{lemma}[smoothness of the Jacobian $\mathcal{J}_{\Lambda_1}(X)$]
\label{lem:smoothness-JLambda}
Suppose \cref{asm:smoothness} holds. Then the Jacobian of $\Lambda_1$ is Lipschitz continuous on
$B_{\delta_0}(X_\star)\cap \St(p,n)^\varepsilon$ with respect to the operator norm. Specifically, for all
$X,Y\in B_{\delta_0}(X_\star)\cap \St(p,n)^\varepsilon$,
\begin{equation*}\label{eq:Lipschitz-JLambda-const}
\bigl\|\mathcal{J}_{\Lambda_1}(X)-\mathcal{J}_{\Lambda_1}(Y)\bigr\|
\;\le\; L_2'\,\normF{X-Y},
\end{equation*}
where $L_2' := 2(1+\varepsilon)L_2
+ 8\sqrt{1+\varepsilon}\,L_1
+ 4L_0
+ 6\lambda\sqrt{1+\varepsilon}$.
\end{lemma}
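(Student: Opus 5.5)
Since the Jacobian acts on all of $\R^{n\times p}$, I would work with the full (unprojected) expression~\eqref{eq:fullJac} from the proof of \cref{prop:block-jac}, rather than with the block representation. Writing
\[
\mathcal J_{\Lambda_1}(X)[V]=2\,\myskew\bigl(\nabla^2 f(X)[V]X^\top\bigr)X+2\,\myskew\bigl(\nabla f(X)V^\top\bigr)X+2\,\myskew(\nabla f(X)X^\top)V+\lambda V(X^\top X-I_p)+2\lambda X\sym(X^\top V),
\]
I would bound $\normF{(\mathcal J_{\Lambda_1}(X)-\mathcal J_{\Lambda_1}(Y))[V]}$ term by term for $\normF{V}\le1$. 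Throughout I use \cref{lem:bounds} ($\|X\|_2,\|Y\|_2\le\sqrt{1+\varepsilon}$), the fact that $\normF{\myskew(A)}\le\normF{A}$, and the bounds $\normF{\nabla f}\le L_0$ (from the Lipschitz continuity of $f$), $\normF{\nabla f(X)-\nabla f(Y)}\le L_1\normF{X-Y}$, and $\|\nabla^2 f(X)\|\le L_1$.

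Each term is handled by a telescoping decomposition that changes one factor at a time.
\begin{itemize}
\item Hessian term: the difference $\nabla^2f(X)[V]X^\top X-\nabla^2f(Y)[V]Y^\top Y$ splits into $(\nabla^2 f(X)-\nabla^2 f(Y))[V]X^\top X$, which is bounded by $(1+\varepsilon)L_2\normF{X-Y}$ using \cref{asm:smoothness}(iii), plus two terms in which one copy of $X$ is replaced by $Y$, each bounded by $L_1\sqrt{1+\varepsilon}\normF{X-Y}$. With the factor $2$, this gives $2(1+\varepsilon)L_2+4\sqrt{1+\varepsilon}L_1$.
\item Second and third terms: these are bilinear in $(\nabla f,X)$. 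Swapping $\nabla f(X)\to\nabla f(Y)$ contributes $L_1\sqrt{1+\varepsilon}$ each, for a total of $4\sqrt{1+\varepsilon}L_1$ after the factor $2$. Swapping $X\to Y$ contributes $L_0$ each, for a total of $4L_0$.
\item $\lambda$-terms: the term $\lambda V(X^\top X-Y^\top Y)$ is bounded by $\lambda\cdot 2\sqrt{1+\varepsilon}\normF{X-Y}$. The term $2\lambda\bigl(X\sym(X^\top V)-Y\sym(Y^\top V)\bigr)$ gives $4\lambda\sqrt{1+\varepsilon}$. Together these give $6\lambda\sqrt{1+\varepsilon}$.
\end{itemize}
Summing the contributions reproduces $L_2'$ exactly, which is a useful check that this accounting matches the intended one.

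The main obstacle is justifying the uniform bounds $\normF{\nabla f}\le L_0$ and $\|\nabla^2 f\|\le L_1$ on the set in question. Both follow from the Lipschitz continuity assumptions, provided the set is open or convex enough for derivatives to control the Lipschitz constants. The set $B_{\delta_0}(X_\star)\cap\St(p,n)^\varepsilon$ need not be convex, so I would argue pointwise: for $X$ in the interior, the Lipschitz bound on $f$ gives $\normF{\nabla f(X)}\le L_0$, and the Lipschitz bound on $\nabla f$ gives the Hessian bound. These extend to the boundary by continuity. All other estimates use only the factors evaluated at $X$ and $Y$, so no convexity is required.
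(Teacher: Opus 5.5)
Your proposal is correct and follows the paper's proof essentially verbatim. Both start from the unprojected Jacobian~\eqref{eq:fullJac}, split it into Hessian, gradient and penalty pieces, telescope one factor at a time, and arrive at the same three constants $2(1+\varepsilon)L_2+4\sqrt{1+\varepsilon}L_1$, $4\sqrt{1+\varepsilon}L_1+4L_0$ and $6\lambda\sqrt{1+\varepsilon}$. Your pointwise justification of $\normF{\nabla f}\le L_0$ and $\|\nabla^2 f\|\le L_1$ on the nonconvex safe region (interior points first, then extension to the boundary by continuity) fills in a step the paper simply takes for granted.
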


\begin{proof}
    Let $X,Y \in B_{\delta_0}(X_\star)\cap \St(p,n)^\varepsilon$ and set $\Delta := X-Y$.
    We rely on the bounds $\norm{X}_2, \norm{Y}_2 \le \sqrt{1+\varepsilon}$, $\normF{\nabla f(X)} \le L_0$, and $\|\nabla^2 f(X)\| \le L_1$ (from \cref{asm:smoothness} and \cref{lem:bounds}). 
    We also utilize the Lipschitz property of the quadratic form on the safe region:
    \begin{equation}
    \label{eq:Lip_XTX}
    \begin{aligned}
    \normF{X^\top X-Y^\top Y}
    & = \normF{X^\top(X-Y)+(X-Y)^\top Y}
    \\
    &\leq(\|X\|_2+\|Y\|_2)\normF{X-Y}\leq2\sqrt{1+\varepsilon}\normF{\Delta},
    \end{aligned}
    \end{equation}
    where the first inequality comes from the triangle inequality and that $\normF{AB}\leq\norm{A}_2\normF{B}$ for all matrices $A$ and $B$.
    
    Recall the Jacobian from~\eqref{eq:fullJac}:
    \begin{equation*}
    \begin{aligned}
        \mathcal{J}_{\Lambda_1}(X)[V] =\;& 
        \underbrace{2\,\mathrm{skew}\!\big(\nabla f(X)V^\top\big)X + 2\,\mathrm{skew}\!\big(\nabla f(X)X^\top\big)V}_{=:\,T_G(X)[V]} \\
        & +\;\underbrace{2\,\mathrm{skew}\!\big(\nabla^2 f(X)[V]\,X^\top\big)\,X}_{=:\,T_H(X)[V]} 
       \;+\; \underbrace{\lambda V(X^\top X-I_p)+2\lambda X\,\mathrm{sym}(X^\top V)}_{=:\,T_P(X)[V]}.
    \end{aligned}
    \end{equation*}
    Considering a unit-norm $V$ ($\normF{V}=1$),
    we proceed to analyze the Lipschitz continuity of each component separately. For the Hessian-related term $T_H$, the difference is bounded as follows,
    \begin{align*}
        \|T_H(X)&[V] - T_H(Y)[V]\|_{\frob}\\
        &\le 2 \normF{\mathrm{skew}\big(\nabla^2 f(X)[V] X^\top\big) (X-Y)} \\
        &\quad + 2 \normF{\mathrm{skew}\big( (\nabla^2 f(X) - \nabla^2 f(Y))[V] X^\top \big) Y} \\
        &\quad + 2 \normF{\mathrm{skew}\big( \nabla^2 f(Y)[V] (X - Y)^\top \big) Y} \\
        &\le 2 \left( L_1 \sqrt{1+\varepsilon} \normF{\Delta} + L_2 \normF{\Delta} (1+\varepsilon) + L_1 \normF{\Delta} \sqrt{1+\varepsilon} \right) \\
        &= \Big( 2(1+\varepsilon)L_2 + 4\sqrt{1+\varepsilon}L_1 \Big) \normF{\Delta}.
    \end{align*}
    In the first inequality, we apply the triangle inequality. In the second inequality, we invoke the property $\normF{\mathrm{skew}(A)} \le \normF{A}$ and $\normF{AB}\le\normF{A}\norm{B}_2$ for all matrices $A$ and $B$, followed by substituting the Lipschitz constant $L_2$ for the Hessian and the bounds $L_1$ and $\sqrt{1+\varepsilon}$.
    
    Next, for the gradient-related term $T_G$, we have
    \begin{align*}
        \|T_G(X)&[V] - T_G(Y)[V]\|_{\frob}\\
        &\le 2 \normF{\mathrm{skew}\big( (\nabla f(X) - \nabla f(Y)) V^\top \big) X} + 2 \normF{\mathrm{skew}\big( \nabla f(Y) V^\top \big) (X - Y)} \\
        &\quad + 2 \normF{\mathrm{skew}\big( (\nabla f(X) - \nabla f(Y)) X^\top \big) V} + 2 \normF{\mathrm{skew}\big( \nabla f(Y) (X - Y)^\top \big) V} \\
        &\le 2 \Big( L_1 \normF{\Delta} \sqrt{1+\varepsilon} + L_0 \normF{\Delta} \Big) + 2 \Big( L_1 \normF{\Delta} \sqrt{1+\varepsilon} + L_0 \normF{\Delta} \Big) \\
        &= \Big( 4\sqrt{1+\varepsilon}L_1 + 4L_0 \Big) \normF{\Delta}.
    \end{align*}
    The first step applies the same expansion fact to both summands of $T_G$. The second step utilizes the Lipschitz constant $L_1$ of the Euclidean gradient and the bound $L_0$ of the Euclidean gradient on the safe region, along with $\normF{\mathrm{skew}(A)} \le \normF{A}$.
    
     Finally, for the penalty-related term $T_P$, the variation is bounded by
    \begin{align*}
        \|T_P(X)&[V] - T_P(Y)[V] \|_{\frob}\\
        &\le \lambda \normF{V (X^\top X - Y^\top Y)} + 2\lambda \normF{X \mathrm{sym}(X^\top V) - Y \mathrm{sym}(Y^\top V)} \\
        &\le   2 \lambda\sqrt{1+\varepsilon} \normF{\Delta}  + 2\lambda \left( \normF{(X-Y)\mathrm{sym}(X^\top V)} + \normF{Y \mathrm{sym}((X-Y)^\top V)} \right) \\
        &\le 2\lambda \sqrt{1+\varepsilon} \normF{\Delta} + 2\lambda \Big( \normF{\Delta} \sqrt{1+\varepsilon} + \sqrt{1+\varepsilon} \normF{\Delta} \Big) \\
        &= 6\lambda\sqrt{1+\varepsilon} \normF{\Delta}.
    \end{align*}
    The first step follows from the triangle inequality. The second step applies~\eqref{eq:Lip_XTX} and expands the bilinear term. The third step bounds the variations using $\norm{X}_2 \le \sqrt{1+\varepsilon}$ and $\normF{\mathrm{sym}(A)} \le \normF{A}$.
    
    Summing the constants from the three components yields the Lipschitz constant $L_2'$.
\end{proof}

In the context of solving $\Lambda_1(X)=0$, the Newton step $\Delta X$ at $X$ is defined as the solution to the linear system \[\mathcal{J}_{\Lambda_1}(X)[\Delta X] = -\Lambda_1(X).\]
In contrast, SOL does not work directly with the exact Jacobian $\mathcal{J}_{\Lambda_1}(X)$. Instead, it employs an approximate Jacobian, denoted by $\mathcal{A}(X)$, to define the update. We formalize this operator below.

\begin{definition}[second-order landing operator]
\label{def:Approximate-Jacobian}
    Define the second-order landing operator 
$\mathcal{A}(X):\mathbb{R}^{n\times p}\to\mathbb{R}^{n\times p}$ 
with respect to the direct sum decomposition $\mathbb{R}^{n\times p}=\T_{X}\StXTX(p,n)\oplus\N_{X}\StXTX(p,n)$ as the block operator
\begin{equation}
    \label{eq:second_order_landing_operator}
\mathcal{A}(X)=
    \begin{bmatrix}
 \mathcal{A}_{\T}(X) & \mathcal{A}_{\N}(X)\\[1mm]
\mathbf{0} & 2\lambda\mathrm{Id}
\end{bmatrix},
\end{equation}
where $\Id$ is the identity map and the top-left and top-right blocks are defined by~\eqref{eq:A_T-def} and~\eqref{eq:A_N-def}, respectively.
\end{definition}

Through the second-order landing operator, the update $\Lambda(X)$ of SOL constructed in~\eqref{eq:second_order_landing_field} satisfies the approximate Newton equation $\mathcal{A}(X)[\Lambda(X)] = -\Lambda_1(X)$. Specifically, by exploiting the block upper-triangular structure of $\mathcal{A}(X)$, the system decouples into systems for the tangent and normal components:
\begin{equation*}
\mathcal{A}(X)[\Lambda(X)] = -\Lambda_1(X)
\quad\iff\quad
\left\{ % 
\begin{aligned}
\mathcal{A}_{\T}(X)[T(X)] + \mathcal{A}_{\N}(X)[N(X)] &= -T_1(X), \\
2\lambda N(X) &= -\lambda \nabla \mathcal{N}(X).
\end{aligned}
\right. 
\end{equation*}
The first equation rearranges to $\mathcal{A}_{\T}(X)[T(X)] = -T_1(X) - \mathcal{A}_{N}(X)[N(X)]$, which matches the tangent linear system~\eqref{eq:computable-tangent-system} in SOL, while
the second equation corresponds to the order-$1$ Newton--Schulz normal update $N(X) = -\frac{1}{2}\nabla \mathcal{N}(X) = N_1(X)$ defined in~\eqref{eq:normal_component}. By~\cref{cor:block-upper-Jac}, we observe that \[\mathcal{A}(X_\star) = \mathcal{J}_{\Lambda_1}(X_\star),\] 
which implies that the approximate Jacobian is accurate to the first order at the solution. The following result confirms that the discrepancy between the two operators vanishes linearly with the distance to the solution.

\begin{proposition}\label{prop:Ldelta-explicit}
Suppose \cref{asm:smoothness,asm:regularity} hold. For all $X\in B_{\delta_0}(X_\star)\cap \St(p,n)^\varepsilon$,
\[
\bigl\|\mathcal{A}(X)-\mathcal{J}_{\Lambda_1}(X)\bigr\|
\;\le\; L_{\Delta}\,\normF{X-X_\star},
\]
where $L_{\Delta}:=2\bigl(\sqrt{1+\varepsilon}\,L_1+L_0\bigr)
+2\lambda\sqrt{1+\varepsilon}\,\frac{3+\varepsilon}{1-\varepsilon}$.
\end{proposition}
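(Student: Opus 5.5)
We write the difference $\mathcal{A}(X)-\mathcal{J}_{\Lambda_1}(X)$ explicitly and bound each part by a multiple of quantities that vanish at $X_\star$: namely $\normF{\myskew(\nabla f(X)X^\top)}$ and $\normF{X^\top X-I_p}$. Comparing \cref{def:Approximate-Jacobian} with \eqref{eq:fullJac} and \cref{prop:block-jac}, the terms $2\,\myskew(\nabla^2 f(X)[V]X^\top+\nabla f(X)V^\top)X$ appear identically in both operators (this is $\mathcal{A}_\T$ on tangent inputs and $\mathcal{A}_\N$ on normal inputs). Hence for $V=V_\T+V_\N$,
\[
\bigl(\mathcal{J}_{\Lambda_1}(X)-\mathcal{A}(X)\bigr)[V]
= 2\,\Omega(X)V + \lambda V(X^\top X-I_p) + 2\lambda X\sym(X^\top V_\N) - 2\lambda V_\N,
\]
using $\sym(X^\top V_\T)=0$. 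So the discrepancy splits into a gradient part $2\Omega(X)V$ and a penalty part $R_P(X)[V]:=\lambda V(X^\top X-I_p)+2\lambda\bigl(X\sym(X^\top V_\N)-V_\N\bigr)$.

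For the gradient part, we use stationarity $\Omega(X_\star)=0$ and write $\Omega(X)=\myskew\bigl((\nabla f(X)-\nabla f(X_\star))X^\top+\nabla f(X_\star)(X-X_\star)^\top\bigr)$. By \cref{asm:smoothness}, \cref{lem:bounds} and $\normF{\myskew(A)}\le\normF{A}$, this gives $\normF{\Omega(X)}\le(\sqrt{1+\varepsilon}L_1+L_0)\normF{X-X_\star}$. Thus $\normF{2\Omega(X)V}\le 2(\sqrt{1+\varepsilon}L_1+L_0)\normF{X-X_\star}\normF{V}$, which matches the first summand of $L_\Delta$.

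For the penalty part, we write $V_\N=XQ_XS$ with $S=\sym(X^\top V)$, where $Q_X=(X^\top X)^{-1}$; this follows from \eqref{eq:projection_T}, since $\Pi^{\mathrm N}_X(V)=XQ_X\sym(X^\top V)$. Then $X^\top V_\N=S$, so $X\sym(X^\top V_\N)-V_\N=X(S-Q_XS)=XQ_X(X^\top X-I_p)S$. With $E:=X^\top X-I_p$, Lemma~\ref{lem:bounds} gives
\[
\normF{R_P(X)[V]}\le\lambda\normF{E}\normF{V}+2\lambda\sqrt{1+\varepsilon}\,\tfrac{1}{1-\varepsilon}\,\normF{E}\,\sqrt{1+\varepsilon}\normF{V}.
\]
As in \eqref{eq:Lip_XTX} with $Y=X_\star$, we have $\normF{E}=\normF{X^\top X-X_\star^\top X_\star}\le(\sqrt{1+\varepsilon}+1)\normF{X-X_\star}\le 2\sqrt{1+\varepsilon}\normF{X-X_\star}$.

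The main obstacle is matching the stated constant $2\lambda\sqrt{1+\varepsilon}\frac{3+\varepsilon}{1-\varepsilon}$. Our first try is to bound $\normF{V_\N}$ and $\normF{\sym(X^\top V)}$ more sharply, for instance via $\normF{S}\le\sqrt{1+\varepsilon}\normF{V}$, and to group the coefficients. We then collect $\lambda\bigl(1+\frac{2(1+\varepsilon)}{1-\varepsilon}\bigr)=\lambda\frac{3+\varepsilon}{1-\varepsilon}$ and multiply by the factor $2\sqrt{1+\varepsilon}$ coming from $\normF{E}$. This produces exactly $2\lambda\sqrt{1+\varepsilon}\frac{3+\varepsilon}{1-\varepsilon}$, which confirms the grouping. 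Taking the supremum over $\normF{V}=1$ and adding the two parts yields the operator-norm bound with $L_\Delta$.
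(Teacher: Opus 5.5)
Your proposal is correct and follows essentially the same route as the paper's proof. The shared terms $2\,\myskew(\nabla^2 f(X)[V]X^\top+\nabla f(X)V^\top)X$ cancel, and the gradient part is bounded via $\Omega(X_\star)=0$. The penalty part is handled through $V_\N=XQ_XS$ with $I_p-Q_X=Q_XE$, which gives exactly the coefficient $\lambda\frac{3+\varepsilon}{1-\varepsilon}\normF{E}$, and this combines with $\normF{E}\le 2\sqrt{1+\varepsilon}\,\normF{X-X_\star}$. The only difference is that you bound $\mathcal{J}_{\Lambda_1}(X)-\mathcal{A}(X)$ rather than its negative, which does not affect the norm.
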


\begin{proof}
Let $\Delta:=X-X_\star$, $E:=X^\top X-I_p$, and recall from~\eqref{eq:Omega_definition} that $\Omega(X)=\myskew(\nabla f(X)X^\top)$.
Since $\Omega(X_\star)=0$, we have
\begin{align}
\normF{\Omega(X)}
&= \normF{\myskew(\nabla f(X)X^\top-\nabla f(X_\star)X_\star^\top)} \leq \normF{\nabla f(X)X^\top-\nabla f(X_\star)X_\star^\top}  \notag\\
&\le \normF{\nabla f(X)-\nabla f(X_\star)}\,\|X\|_2
      +\normF{\nabla f(X_\star)}\,\|\Delta\|_2 \notag\\
&\le (\sqrt{1+\varepsilon}\,L_1+L_0)\,\normF{\Delta},
\label{eq:Omega-bound-new}
\end{align}
where we use the Lipschitz continuity of $\nabla f$, $\|\Delta\|_2\le\normF{\Delta}$,
and $\|X\|_2\le \sqrt{1+\varepsilon}$.
Moreover,
\begin{equation}
\label{eq:E-bound-new}
    \normF{E}
=\normF{X^\top X-X_\star^\top X_\star}
\le (\|X\|_2+\|X_\star\|_2)\,\normF{\Delta}
\le 2\sqrt{1+\varepsilon}\,\normF{\Delta}.
\end{equation}

Let $\mathcal D(X):=\mathcal A(X)-\mathcal J_{\Lambda_1}(X)$ and set
$Q_X:=(X^\top X)^{-1}$. For all $V\in\mathbb R^{n\times p}$, write
$V=V_{\T}+V_{\N}$ with
$V_{\N}:=(\Id-\Pi_X^{\T})V=XQ_XS$, where
$S:=\sym(X^\top V)$, $\Id$ is the identity map, and $\Pi_X^{\T}$ is given in~\eqref{eq:projection_T}. Using the definition of $\mathcal A(X)$ in~\eqref{eq:second_order_landing_operator}
and the expression of $\mathcal J_{\Lambda_1}(X)$ in~\eqref{eq:fullJac}, we obtain
\[
\begin{aligned}
\mathcal A(X)[V]
&=
2\myskew\bigl(\nabla^2 f(X)[V]X^\top+\nabla f(X)V^\top\bigr)X
+2\lambda V_\N,\\
\mathcal J_{\Lambda_1}(X)[V]
&=
2\myskew\bigl(\nabla^2 f(X)[V]X^\top+\nabla f(X)V^\top\bigr)X
+2\Omega(X)V+\lambda V E+2\lambda XS .
\end{aligned}
\]
Hence,
\[
\mathcal D(X)[V]
=
-2\Omega(X)V-\lambda V E+2\lambda X(Q_X-I_p)S .
\]
Since $X^\top X=I_p+E$, we have $Q_X-I_p=-Q_XE$. Therefore,
\[
\begin{aligned}
\normF{\mathcal D(X)[V]}
&\leq
2\normF{\Omega(X)}\normF{V}
+\lambda\normF{E}\normF{V}
+2\lambda\|X\|_2\|Q_X\|_2\normF{E}\normF{S} \\
&\leq
\left(
2\normF{\Omega(X)}
+\lambda\left(1+2\frac{1+\varepsilon}{1-\varepsilon}\right)\normF{E}
\right)\normF{V},
\end{aligned}
\]
where we used $\normF S\le \|X\|_2\normF{V}$,
$\|X\|_2\le\sqrt{1+\varepsilon}$, and
$\|Q_X\|_2\leq (1-\varepsilon)^{-1}$ from~\cref{lem:bounds}. It follows that
\[
\|\mathcal D(X)\|
\leq
2\normF{\Omega(X)}
+\lambda\frac{3+\varepsilon}{1-\varepsilon}\normF{E} .
\]
Finally, substituting~\eqref{eq:Omega-bound-new}--\eqref{eq:E-bound-new} into the above estimate yields the claim.
\end{proof}

We next state the main local convergence result.

\begin{theorem}[quadratic convergence]\label{thm:quadratically}
Suppose \cref{asm:smoothness,asm:regularity} hold. Let $(X_k)$ be generated by the unit-step SOL method in~\cref{alg:second-order_landing}, given by
\[
X_{k+1}=X_k+\Lambda(X_k),
\qquad
\Lambda(X_k)=T(X_k)+N(X_k),
\]
where the normal component $N(X_k)$ is given by~\eqref{eq:normal_component}, and the tangent component
$T(X_k)\in \T_{X_k}\St_{X_k^\top X^{}_k}(p,n)$
is the solution to~\eqref{eq:computable-tangent-system}. Then there exists $\bar\delta>0$ such that, for all
$X_0\in B_{\bar\delta}(X_\star)\cap \St(p,n)^\varepsilon$,
the iterates remain in $\St(p,n)^\varepsilon$ and converge to $X_\star$ quadratically. 
\end{theorem}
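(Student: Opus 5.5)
We treat SOL as an approximate Newton method for the equation $\Lambda_1(X)=0$ with $\lambda=\tfrac12$, and use the identity $\mathcal{A}(X)[\Lambda(X)]=-\Lambda_1(X)$. The error recursion is the standard one:
\[
X_{k+1}-X_\star=\mathcal{A}(X_k)^{-1}\Bigl(\mathcal{A}(X_k)[X_k-X_\star]-\Lambda_1(X_k)+\Lambda_1(X_\star)\Bigr),
\]
since $\Lambda_1(X_\star)=0$. Splitting the bracket gives
\[
\bigl(\mathcal{A}(X_k)-\mathcal{J}_{\Lambda_1}(X_k)\bigr)[X_k-X_\star]+\Bigl(\mathcal{J}_{\Lambda_1}(X_k)[X_k-X_\star]-\Lambda_1(X_k)+\Lambda_1(X_\star)\Bigr).
\]
We bound the first term by \cref{prop:Ldelta-explicit}, giving $L_\Delta\normF{X_k-X_\star}^2$. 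For the second term, the integral form of Taylor's theorem along the segment $[X_\star,X_k]$ and \cref{lem:smoothness-JLambda} give $\tfrac12 L_2'\normF{X_k-X_\star}^2$. The segment stays in $B_{\delta_0}(X_\star)$, and, for $\bar\delta$ small, also in a slightly enlarged safe region where the same constants hold up to harmless changes. Alternatively, we can state the Lipschitz bound for $\mathcal{J}_{\Lambda_1}$ on the ball directly, since its proof only uses bounds on $\|X\|_2$.

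The key remaining ingredient is a uniform bound $\|\mathcal{A}(X)^{-1}\|\le M$ near $X_\star$. At $X_\star$ we have $\mathcal{A}(X_\star)=\mathcal{J}_{\Lambda_1}(X_\star)$, which is block upper triangular by \cref{cor:block-upper-Jac}. Its diagonal blocks are $\Hess f(X_\star)$, positive definite on the tangent space by \cref{asm:regularity}, and $2\lambda\Id=\Id$ on the normal space. Hence it is invertible, with inverse given by block back-substitution.

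The difficulty is that $\mathcal{A}(X)$ is defined blockwise relative to the $X$-dependent splitting $\T_X\oplus\N_X$. It is cleaner to view $\mathcal{A}(X)$ as a single linear map on $\R^{n\times p}$, namely the formula used in the proof of \cref{prop:Ldelta-explicit}:
\[
\mathcal{A}(X)[V]=2\myskew\bigl(\nabla^2 f(X)[V]X^\top+\nabla f(X)V^\top\bigr)X+2\lambda(\Id-\Pi_X^{\T})V.
\]
This map depends continuously on $X$, since $\Pi_X^{\T}$ is continuous on the full-rank matrices. Invertibility at $X_\star$ plus the Banach perturbation lemma then gives $\|\mathcal{A}(X)^{-1}\|\le 2\|\mathcal{A}(X_\star)^{-1}\|=:M$ on some ball $B_{\delta_1}(X_\star)$. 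Alternatively, we can perturb $\mathcal{J}_{\Lambda_1}$, which is Lipschitz, and combine it with \cref{prop:Ldelta-explicit}.

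We must also check that the unique solution $\Lambda(X)$ of $\mathcal{A}(X)[\Lambda]=-\Lambda_1(X)$ is exactly $T(X)+N(X)$. For this, uniqueness of $T(X)$ in~\eqref{eq:computable-tangent-system} must follow from invertibility of the compressed operator $\mathcal{A}_\T(X)$ on $\T_X$. This in turn follows from invertibility of $\mathcal{A}(X)$ together with the triangular structure, since $\mathcal{A}_\T(X)$ maps $\T_X$ into $\T_X$.

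Combining these, we get
\[
\normF{X_{k+1}-X_\star}\le C\normF{X_k-X_\star}^2,\qquad C=M\bigl(L_\Delta+\tfrac12L_2'\bigr).
\]
We choose $\bar\delta\le\min\{\delta_0,\delta_1,1/(2C)\}$, so that the errors contract and the iterates stay in $B_{\bar\delta}(X_\star)$. The iterates must also remain in $\St(p,n)^\varepsilon$, so that the unit step is accepted and the bounds apply. By \eqref{eq:E-bound-new},
\[
\normF{X_{k+1}^\top X_{k+1}-I_p}\le2\sqrt{1+\varepsilon}\,\normF{X_{k+1}-X_\star},
\]
so shrinking $\bar\delta$ further to $\bar\delta\le\varepsilon/(2\sqrt{1+\varepsilon}+1)$ suffices. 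An induction on $k$ then completes the proof. The main obstacle is the uniform invertibility of $\mathcal{A}(X)$ near $X_\star$, and making rigorous that the blockwise definition of $\mathcal{A}(X)$ is a continuous operator.
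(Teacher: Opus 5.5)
Your proposal is correct and follows the paper's proof essentially step for step. The shared structure is the error recursion $e_{k+1}=\mathcal{A}(X_k)^{-1}(\mathcal{A}(X_k)[e_k]-\Lambda_1(X_k))$, split into $(\mathcal{A}-\mathcal{J}_{\Lambda_1})(X_k)[e_k]$ (bounded by \cref{prop:Ldelta-explicit}) plus an integral Taylor remainder (bounded by \cref{lem:smoothness-JLambda}), with the perturbation bound $\|\mathcal{A}(X)^{-1}\|\le 2\kappa_\star$ from $\mathcal{A}(X_\star)=\mathcal{J}_{\Lambda_1}(X_\star)$ and the constant $C=2\kappa_\star(L_\Delta+\tfrac12 L_2')$. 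The points you make explicit are sound refinements of steps the paper leaves implicit: continuity of $\mathcal{A}$ through its single-formula expression, invertibility of $\mathcal{A}_\T(X)$ on $\T_X$ from the triangular structure, and shrinking $\bar\delta$ so the ball lies in $\St(p,n)^\varepsilon$, which the paper handles via $\delta_F$. Your use of \eqref{eq:E-bound-new} at $X_{k+1}$ is also valid, since your choice of $\bar\delta$ gives $\|X_{k+1}\|_2\le 1+\bar\delta\le\sqrt{1+\varepsilon}$.
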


\begin{proof}
By \cref{asm:regularity} and \cref{cor:block-upper-Jac}, the Jacobian $\mathcal{J}_{\Lambda_1}(X_\star)$ is nonsingular. Let
\[
\kappa_\star := \|\mathcal{J}_{\Lambda_1}(X_\star)^{-1}\|.
\]
Since $\mathcal{A}(X_\star)=\mathcal{J}_{\Lambda_1}(X_\star)$ and $\mathcal{A}$ is continuous, there exists
$\delta_A>0$ such that
$\|\mathcal{A}(X)-\mathcal{A}(X_\star)\| \le (2\kappa_\star)^{-1}$, for all $X\in B_{\delta_A}(X_\star)$.
Hence $\mathcal{A}(X)$ is invertible on $B_{\delta_A}(X_\star)$ and
\begin{equation}
\label{eq:Ainv_bd}
\|\mathcal{A}(X)^{-1}\|\le 2\kappa_\star ,\qquad \text{for~all}\,X\in B_{\delta_A}(X_\star).
\end{equation}

Choose $\delta_F>0$ such that $B_{\delta_F}(X_\star)\subset \St(p,n)^\varepsilon$. Let $\delta > 0$ be the radius associated with \cref{asm:smoothness}, then define
$C:=2\kappa_\star\Big(L_\Delta+\tfrac12 L_2'\Big),
\bar\delta := \min\Big\{\delta_0,\delta_A,\delta_F,\; \frac{1}{C}\Big\}$,
and consider any $X_k\in B_{\bar\delta}(X_\star)$. Let \[e_k:=X_k-X_\star.\]
By the approximate Newton equation $\mathcal{A}(X_k)[\Lambda(X_k)]=-\Lambda_1(X_k)$, we have
\[
e_{k+1}=e_k+\Lambda(X_k)
= e_k-\mathcal{A}(X_k)^{-1}\Lambda_1(X_k)
= \mathcal{A}(X_k)^{-1}\big(\mathcal{A}(X_k)[e_k]-\Lambda_1(X_k)\big).
\]
It follows from the fundamental theorem of calculus that
$\Lambda_1(X_k)
= \int_0^1 \mathcal{J}_{\Lambda_1}(X_\star+t e_k)[e_k]\,\mathrm{d}t$,
hence
\[
\mathcal{A}(X_k)[e_k]-\Lambda_1(X_k)
=\big(\mathcal{A}(X_k)-\mathcal{J}_{\Lambda_1}(X_k)\big)[e_k]
+\int_0^1\big(\mathcal{J}_{\Lambda_1}(X_k)-\mathcal{J}_{\Lambda_1}(X_\star+t e_k)\big)[e_k]\,\mathrm{d}t.
\]
Taking norms and applying \cref{prop:Ldelta-explicit} and \cref{lem:smoothness-JLambda}, we obtain
\begin{align*}
    \normF{\mathcal{A}(X_k)[e_k] - \Lambda_1(X_k)} 
    &\le \norm{\mathcal{A}(X_k) - \mathcal{J}_{\Lambda_1}(X_k)} \,\normF{e_k} + \int_0^1 L_2' (1-t)\normF{e_k}^2 \, \mathrm{d}t \\
    &\le L_{\Delta} \normF{e_k}^2 + \frac{1}{2}L_2' \normF{e_k}^2 = \left( L_{\Delta} + \frac{1}{2}L_2' \right) \normF{e_k}^2.
\end{align*}
Combining this with the inverse bound~\eqref{eq:Ainv_bd}, we obtain the quadratic convergence
\begin{equation}\label{eq:qquad}
    \normF{e_{k+1}} \le 2\kappa_\star \left( L_{\Delta} + \frac{1}{2}L_2' \right) \normF{e_k}^2 = C \normF{e_k}^2.
\end{equation}
Since $\normF{e_k} \le \bar{\delta} \le 1/C$, we have $C\normF{e_k} \le 1$, which implies $\normF{e_{k+1}} \le \normF{e_k} \le \bar{\delta}$. Thus, $X_{k+1}$ remains in $B_{\bar{\delta}}(X_\star)$, allowing the induction to proceed. The inequality~\eqref{eq:qquad} establishes the quadratic convergence.
\end{proof}

\begin{remark}
\label[remark]{remark:higher_order_equivalence}
The equation~\eqref{eq:computable-tangent-system} is developed as a tractable approximation of the modified Newton equation~\eqref{eq:SOL-sym}. Near a nondegenerate minimizer, the discrepancy between the two equations is of higher order. Consequently, the tangent components generated by the two systems, as well as the resulting second-order landing updates, differ only by higher-order terms in the local regime.
For this reason, the proof of local quadratic convergence of SOL via the approximation equation~\eqref{eq:computable-tangent-system} also applies to SOL-sym via the modified Newton equation~\eqref{eq:SOL-sym}, up to minor adjustments of the constants.
\end{remark}

\subsection{An inexact variant of SOL with local superlinear convergence}
\label{subsec:inexact_sol_algorithm}

Although SOL (or SOL-sym) achieves fast local convergence, solving the approximate Newton equation~\eqref{eq:computable-tangent-system} (or the modified Newton equation~\eqref{eq:SOL-sym}) exactly at every iteration is computationally prohibitive in large-scale settings. To alleviate this burden, we develop an inexact variant.

Within the inexact framework, the normal component $N(X_k)$ is evaluated explicitly and remains exact. The inexactness is therefore confined to the tangent component. In practice, the linear system is solved approximately via an appropriate Krylov subspace solver, such as the biconjugate gradient stabilized method (BiCGSTAB)~\cite{VorstBicgstab1992} or LGMRES~\cite{bakerlgmres2005} for the non-symmetric equation~\eqref{eq:computable-tangent-system} in SOL, and CG or MINRES~\cite{paigeSolutionSparseIndefinite1975} under the metric $g$ defined in~\eqref{eq:extended_canonical_metric} for the $g$-symmetric equation~\eqref{eq:SOL-sym} in SOL-sym.

For each iteration, we seek a tangent component $\widetilde{T}(X_k)\in\T_{X_k}\St_{X_k^\top X^{}_k}(p,n)$ satisfying the enforcing condition~\cite{EisenstatEWcondition1996}
\begin{equation}
    \label{eq:inexact_condition}
    \normF{r(X_k)} \le \min\left\{\zeta_{\max}, \normF{b(X_k)}^\theta\right\} \normF{b(X_k)},
\end{equation}
Here, $b(X):=-\grad f(X)-\mathcal{A}_N(X)[N(X)]$ denotes the right-hand side of Newton equations,
and $r(X_k)$ denotes the residual of the corresponding linear system, $r(X) := \mathcal{A}_T(X)[\widetilde T(X)]-b(X)$ for SOL, or $r(X) := \Hess f(X)[\widetilde T(X)]-b(X)$ for SOL-sym. The parameters $\zeta_{\max} \in (0, 1)$ and $\theta > 0$ control the adaptive forcing sequence, guaranteeing local superlinear convergence, which becomes quadratic when $\theta=1$.

The convergence analysis mirrors that of the exact SOL, modified only by the introduction of the inner solver residual. As the following corollary establishes, under the same regularity conditions, the inexact variant achieves a local superlinear convergence rate, of order $\min\{2,1+\theta\}$.

\begin{corollary}[superlinear convergence]
\label{cor:inexact_local}
Suppose \cref{asm:smoothness,asm:regularity} hold. Let $(X_k)$ be generated by the unit-step inexact SOL iteration, given by
\[
X_{k+1}=X_k+\Lambda(X_k),
\qquad
\Lambda(X_k)=\widetilde{T}(X_k)+N(X_k),
\]
where the normal component $N(X_k)$ is given by~\eqref{eq:normal_component}, and the tangent component
$\widetilde{T}(X_k)\in \T_{X_k}\St_{X_k^\top X^{}_k}(p,n)$
satisfies the enforcing condition~\eqref{eq:inexact_condition} with parameters $\zeta_{\max}\in(0,1)$ and $\theta>0$. Then there exists $\tilde\delta>0$ such that, for all
$X_0\in B_{\tilde\delta}(X_\star)\cap \St(p,n)^\varepsilon$,
the iterates remain in $\St(p,n)^\varepsilon$ and converge to $X_\star$ with order $\min\{2,1+\theta\}$. 
\end{corollary}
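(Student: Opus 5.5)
We mirror the proof of \cref{thm:quadratically}, treating the inexact step as a perturbed approximate Newton step. Since the normal component is exact and the tangent residual $r(X_k)$ lies in $\T_{X_k}\St_{X_k^\top X_k^{}}(p,n)$, the block upper-triangular structure of $\mathcal{A}(X)$ in \cref{def:Approximate-Jacobian} gives
\[
\mathcal{A}(X_k)[\Lambda(X_k)] = -\Lambda_1(X_k) + r(X_k).
\]
Indeed, the top block equation holds with an extra $r(X_k)$, and the bottom block is unchanged. With $e_k:=X_k-X_\star$ we obtain
\[
e_{k+1} = \mathcal{A}(X_k)^{-1}\bigl(\mathcal{A}(X_k)[e_k]-\Lambda_1(X_k)\bigr) + \mathcal{A}(X_k)^{-1}[r(X_k)].
\]
For SOL-sym, the residual is measured with $\Hess f$. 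By \cref{remark:higher_order_equivalence}, the same identity holds up to higher-order terms absorbed into the constants.

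The first term is bounded exactly as in the proof of \cref{thm:quadratically}, using \eqref{eq:Ainv_bd}, \cref{prop:Ldelta-explicit}, and \cref{lem:smoothness-JLambda}. It is at most $C\normF{e_k}^2$. For the second term we use $\normF{\mathcal{A}(X_k)^{-1}[r(X_k)]}\le 2\kappa_\star\normF{r(X_k)}$ together with \eqref{eq:inexact_condition}. This gives $\normF{r(X_k)}\le \normF{b(X_k)}^{1+\theta}$ and also $\normF{r(X_k)}\le\zeta_{\max}\normF{b(X_k)}$.

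The key step is to show that $\normF{b(X_k)}\le c_b\normF{e_k}$. We bound $\grad f(X_k)=T_1(X_k)=2\Omega(X_k)X_k$ by \eqref{eq:Omega-bound-new}, which gives $\normF{\grad f(X_k)}\le 2\sqrt{1+\varepsilon}(\sqrt{1+\varepsilon}L_1+L_0)\normF{e_k}$. For $\mathcal{A}_\N(X_k)[N(X_k)]$, we note that $\normF{N(X_k)}\le\tfrac12\sqrt{1+\varepsilon}\normF{E_k}\le(1+\varepsilon)\normF{e_k}$ by \eqref{eq:E-bound-new}. The operator $\mathcal{A}_\N(X_k)$ is bounded by $2(1+\varepsilon)L_1+2\sqrt{1+\varepsilon}L_0$, using \cref{asm:smoothness} and \cref{lem:bounds}. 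Combining the pieces yields
\[
\normF{e_{k+1}} \le C\normF{e_k}^2 + 2\kappa_\star c_b^{1+\theta}\normF{e_k}^{1+\theta}.
\]
This is order $\min\{2,1+\theta\}$ once we know the iterates stay in the ball.

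The main obstacle is the induction, namely keeping $X_{k+1}$ in $B_{\tilde\delta}(X_\star)\cap\St(p,n)^\varepsilon$. We plan to choose $\tilde\delta\le\bar\delta$ small enough that
\[
C\tilde\delta + 2\kappa_\star c_b^{1+\theta}\tilde\delta^{\theta}\le \tfrac12.
\]
Then $\normF{e_{k+1}}\le\tfrac12\normF{e_k}$, so every iterate remains in $B_{\tilde\delta}(X_\star)\subset B_{\delta_F}(X_\star)\subset\St(p,n)^\varepsilon$. In particular the unit step is always accepted, and $e_k\to0$. The bound of the previous paragraph then gives the rate $\min\{2,1+\theta\}$.

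If the $\theta$-branch is inconvenient for very small $\theta$, the $\zeta_{\max}$-branch still gives linear contraction, $\normF{e_{k+1}}\le(C\normF{e_k}+2\kappa_\star\zeta_{\max}c_b)\normF{e_k}$. In that case we would additionally shrink $\tilde\delta$, or use the $\theta$-branch for the small-ball control as above.
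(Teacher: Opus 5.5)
Your proposal is correct and follows essentially the same route as the paper. In both, the inexact step is the exact SOL step plus $\mathcal{A}(X_k)^{-1}[r_k]$; the first part is controlled by the quadratic estimate~\eqref{eq:qquad}, the second by $2\kappa_\star\normF{b(X_k)}^{1+\theta}$ with $b$ Lipschitz and vanishing at $X_\star$, and the same small-ball induction closes the argument. The differences are cosmetic: you make the Lipschitz constant of $b$ explicit, and you correctly observe that $\normF{r_k}\le\normF{b(X_k)}^{1+\theta}$ follows directly from the $\min$ in~\eqref{eq:inexact_condition}. Hence the paper's extra restriction $\tilde\delta\le\zeta_{\max}^{1/\theta}/L_b$, and your final fallback paragraph, are not actually needed.
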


\begin{proof}
Let $\Lambda_k^{\mathrm{ex}}$ denote the exact SOL update at $X_k$, i.e.,
\begin{equation*}
\mathcal A(X_k)[\Lambda_k^{\mathrm{ex}}]=-\Lambda_1(X_k).
\end{equation*}
Define the linear residual associated with $\Lambda_k$ by
\[
r_k:=\mathcal A(X_k)[\Lambda_k]+\Lambda_1(X_k).
\]
Since the normal component is computed exactly and only the tangent component is solved inexactly, it follows from~\eqref{eq:inexact_condition} that
\begin{equation}\label{eq:rk_bound}
\normF{r_k}=\normF{\mathcal A_\T(X_k)[\widetilde{T}(X_k)]-b(X_k)}
\le \xi_k\,\normF{b(X_k)},\quad
\xi_k:=\min\{\zeta_{\max},\normF{b(X_k)}^\theta\}.
\end{equation}

Proceeding as in the proof of \cref{thm:quadratically}, let
$\kappa_\star:=\|\mathcal J_{\Lambda_1}(X_\star)^{-1}\|$ and choose $\delta_A>0$ so that
$\mathcal A(X)$ is invertible on $B_{\delta_A}(X_\star)$ with
\begin{equation}\label{eq:Ainv_bd_inexact}
\|\mathcal A(X)^{-1}\|\le 2\kappa_\star,\qquad \text{for~all}\,X\in B_{\delta_A}(X_\star).
\end{equation}
Choose $\delta_F>0$ so that $B_{\delta_F}(X_\star)\subset \St(p,n)^\varepsilon$.
By \cref{asm:smoothness} and the definitions of $b$, $\mathcal A_\N$, and $N$, the mapping $b$ is locally Lipschitz continuous around $X_\star$. 
Moreover, since $X_\star$ is a stationary point and $N(X_\star)=0$, we have 
$b(X_\star)=0$. Hence, there exist $L_b>0$ and $\delta_b>0$ such that
% Since $b(\cdot)$ is continuously differentiable and $b(X_\star)=0$, there exist $L_b>0$ and $\delta_b>0$ such that
\begin{equation}\label{eq:b_Lip}
\normF{b(X)} \le L_b\normF{X-X_\star},\qquad \text{for~all}\,X\in B_{\delta_b}(X_\star).
\end{equation}

Define
$C_2:=2\kappa_\star\Big(L_\Delta+\tfrac12L_2'\Big),
C_{1+\theta}:=2\kappa_\star\,L_b^{1+\theta}$,
and pick $\tilde\delta>0$ as
$\tilde\delta
:=\min\left\{\delta_0,\delta_A,\delta_F,\delta_b,\ \zeta_{\max}^{1/\theta}/{L_b},\ 
\tilde\delta_{\mathrm{inv}}\right\}$,
where $0<\tilde\delta_{\mathrm{inv}}$ satisfies $\ C_2\tilde\delta_{\mathrm{inv}}+C_{1+\theta}\tilde\delta_{\mathrm{inv}}^\theta\le 1$.

Now fix $X_k\in B_{\tilde\delta}(X_\star)$ and let $e_k:=X_k-X_\star$.
By~\eqref{eq:b_Lip}, $\normF{b(X_k)}\le L_b\normF{e_k}\le \zeta_{\max}^{1/\theta}$, hence $\xi_k=\normF{b(X_k)}^\theta$ in
\eqref{eq:rk_bound} and thus $\normF{r_k}\le \normF{b(X_k)}^{1+\theta}$.
Subtracting the exact and inexact linear systems yields
\[
\mathcal A(X_k)[\Lambda_k-\Lambda_k^{\mathrm{ex}}]=r_k
\quad\Rightarrow\quad
\normF{\Lambda_k-\Lambda_k^{\mathrm{ex}}} \le \|\mathcal A(X_k)^{-1}\|\,\normF{r_k}.
\]
Using~\eqref{eq:Ainv_bd_inexact} and~\eqref{eq:b_Lip} gives
\begin{equation}\label{eq:dir_err}
\normF{\Lambda_k-\Lambda_k^{\mathrm{ex}}}
\le 2\kappa_\star \normF{b(X_k)}^{1+\theta}
\le C_{1+\theta}\normF{e_k}^{1+\theta}.
\end{equation}

Finally,
\[
e_{k+1}=e_k+\Lambda_k=(e_k+\Lambda_k^{\mathrm{ex}})+(\Lambda_k-\Lambda_k^{\mathrm{ex}}).
\]
By \cref{thm:quadratically} (cf.~\eqref{eq:qquad}),
$\normF{e_k+\Lambda_k^{\mathrm{ex}}} \le C_2\normF{e_k}^2$.
Combining with~\eqref{eq:dir_err} gives
\[
\normF{e_{k+1}} \le C_2\normF{e_k}^2 + C_{1+\theta}\normF{e_k}^{1+\theta}.
\]
Thus the iterates converge to $X_\star$ with order $\min\{2,1+\theta\}$.
Moreover, since $\normF{e_k}\le \tilde\delta$ and $C_2\tilde\delta + C_{1+\theta}\tilde\delta^\theta\le 1$, we obtain
$\normF{e_{k+1}}\le \normF{e_k}\le \tilde\delta$, so all iterates remain in
$B_{\tilde\delta}(X_\star)\subset \St(p,n)^\varepsilon$.
\end{proof}

\section{Numerical experiments}
\label{sec:numerical_examples}
In this section, we perform numerical experiments for solving~\eqref{eq:problem}---on the orthogonal Procrustes problem, principal component analysis, and independent component analysis---to illustrate the efficiency of the second landing methods, SOL and SOL-sym, presented in~\cref{alg:second-order_landing}. 

The compared approaches include the first-order landing method\footnote{Available at: \url{https://github.com/simonvary/landing-stiefel}.} (Landing), the inexact Riemannian Newton method (IRNM)~\cite{absilOptimizationAlgorithmsMatrix2008a,boumalIntroductionOptimizationSmooth2023b}, an exact-penalty Newton method for optimization on the Stiefel manifold (ExPen)~\cite{xiaoLiuExPen2024} with the penalty parameter set to $10$, and
a sequential quadratic programming (SQP) method\footnote{Available at: \url{https://github.com/scipy/scipy/tree/main/scipy/optimize/_trustregion_constr}.}~\cite{JNBOSQP1998}. For fair comparison, all linear systems are solved inexactly via Krylov subspace methods by the enforcing condition~\eqref{eq:inexact_condition} with $\theta=1.0$ and $\zeta_{\max}=0.1$. Specifically, BiCGSTAB\footnote{We use the SciPy implementation \texttt{scipy.sparse.linalg.bicgstab}; see \url{https://docs.scipy.org/doc/scipy/reference/generated/scipy.sparse.linalg.bicgstab.html}.} is used for solving the (non-symmetric) approximate Newton equation~\eqref{eq:computable-tangent-system} in SOL, and MINRES under $g$ metric~\eqref{eq:extended_canonical_metric} is employed for solving the ($g$-symmetric) modified Newton equation~\eqref{eq:SOL-sym} in SOL-sym. The standard MINRES is adopted to solve the Newton equations in IRNM and ExPen. Unless otherwise stated, the parameters in these approaches adopt the default settings. 

In order to reach a local regime for second-order methods, we first run the first-order landing method~\eqref{eq:FOL_iteration} until a prescribed accuracy is achieved, then initialize the second-order method at the obtained iterate. The feasible method, IRNM, is implemented on the Stiefel manifold with a standard QR-based retraction, and is initialized at the projection onto the Stiefel manifold of the point where the infeasible second-order methods are initialized. Throughout the experiments, we use the violation of first-order optimality
\begin{equation*}
    \normF{\grad f(X_k)}+
    \normF{X_k^\top X^{}_k-I_p} \leq \texttt{tol} = 10^{-12}
\end{equation*}
and the maximum iteration number $\texttt{mxit}=200$ as the stopping criterion, where the Riemannian gradient $\grad f(X_k) = 2\myskew(\nabla f(X_k)X_k^\top)X_k$ as in~\eqref{eq:Riemannian_gradient}.

All experiments are performed on a workstation with two Intel(R) Xeon(R) Processors Gold 6330 (at 2.00GHz$\times$28, 42M Cache) and 512GB of RAM running Python (Release 3.12.1) under Ubuntu 22.04.3. The code that produced the results is available at \url{https://github.com/XinhuiXiong/SOLanding}.

\subsection{Orthogonal Procrustes problem}
To investigate the effect of the tangent component in the second-order landing framework~\eqref{eq:second_order_landing_framework}, we compare the SOL and SOL-sym methods. Consider the orthogonal Procrustes problem~\cite{schonemannOrthogonalProcrustes1966}
\[
    \min_{X\in \St(d,d)} \frac{1}{2n}\|AX-B\|_{\mathrm F}^2,
\]
where $A,B\in\mathbb R^{n\times d}$ are given matrices. 
We generate synthetic instances with ground-truth solutions. We set the dimension $(n, d) = (10000, 1000)$, generate $A$ as a standard Gaussian matrix, and form $B = A X_{\mathrm{true}} + \sigma \varXi$, where $X_{\mathrm{true}}\in 
\St(d,d)$ is a random ground-truth matrix, $\varXi$ is a standard Gaussian noise matrix, and the noise level is $\sigma=0.02$. An initial point $X_0\in\mathbb R^{n\times d}$ is generated to satisfy $\normF{\grad f(X_0)}\leq10^{-2}$.

The evolution of the norm of the Riemannian gradient and feasibility violation is reported in~\cref{fig:procrustes_all}. We compare SOL and SOL-sym with the first-order landing method, and observe that SOL and SOL-sym exhibit fast local convergence. The empirical results validate the effectiveness of SOL and SOL-sym in accelerating local convergence. Note that SOL outperforms SOL-sym thanks to its lower per-iteration cost.

\begin{figure}[htbp]
    \centering
    \begin{subfigure}[t]{0.49\textwidth}
        \centering
        \includegraphics[width=\textwidth]{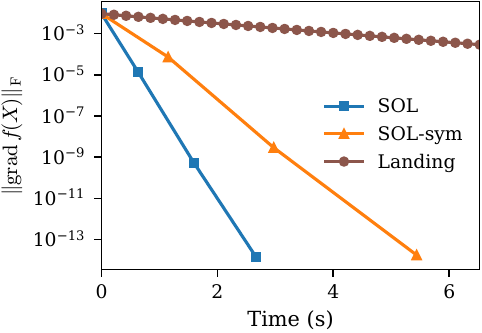}
        \label{fig:procrustes_tangent_time}
    \end{subfigure}
    \hfill
    \begin{subfigure}[t]{0.49\textwidth}
        \centering
        \includegraphics[width=\textwidth]{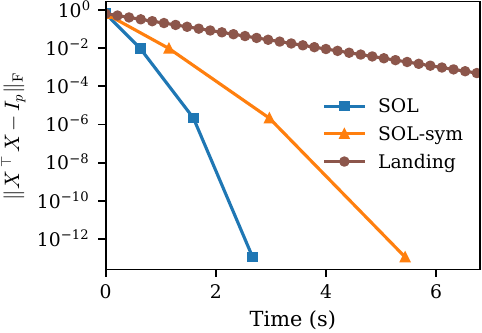}
        \label{fig:procrustes_ortho_time}
    \end{subfigure}
    \caption{Numerical comparison of different methods on the orthogonal Procrustes problem. {Left:} gradient norm; {Right:} feasibility violation.}
    \label{fig:procrustes_all}
\end{figure}

\subsection{Principal component analysis}
We compare the proposed second-order landing methods, SOL and SOL-sym, with existing methods on the principal component analysis (PCA) problem~\cite{PearsonPCA1901}, which is formulated as
\[
\min_{X\in\St(p,n)} -\frac{1}{N}\tr (X^\top A^\top A X),
\]
where $A\in\R^{N\times n}$ is a synthetically generated data matrix, with $N=30000$ samples, $n=10000$ features, and $p=500$ target principal components.
The rows of $A$ are independently sampled from the normal distribution $\mathcal{N}(0,UU^\top+\sigma I_n)$, where $U\in\R^{n \times p}$ is sampled uniformly from $\St(p,n)$, and $\sigma=0.1$. 
An initial point $X_0\in\mathbb R^{n\times d}$ is generated to satisfy $\normF{\grad f(X_0)}\leq10^{-2}$. 
To avoid oversolving the subproblems, we set the parameters in the enforcing condition~\eqref{eq:inexact_condition} as $\theta=0.5$ and $\zeta_{\max}=0.1$.

\begin{figure}[htbp]
    \centering
        \begin{subfigure}{0.49\textwidth}
        \centering
        \includegraphics[width=\linewidth]{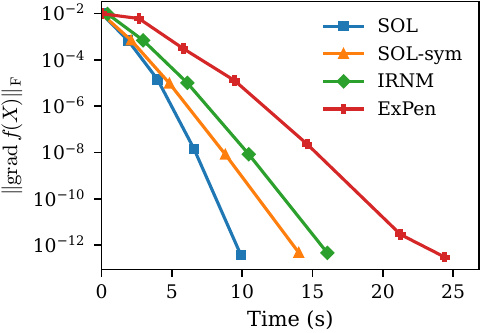}
        \label{fig:pca_tangent_time}
    \end{subfigure}
        \hfill 
        \begin{subfigure}{0.49\textwidth}
        \centering
        \includegraphics[width=\linewidth]{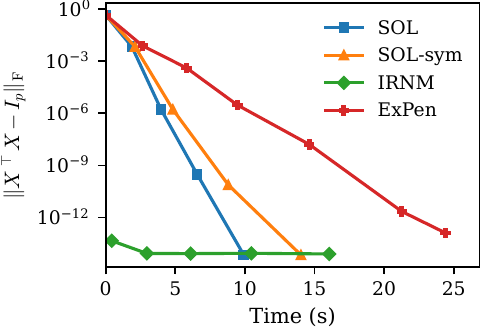}
        \label{fig:pca_ortho_time}
    \end{subfigure}

    \caption{Numerical comparison of different methods on principal component analysis. {Left:} gradient norm; {Right:} feasibility violation.}
    \label{fig:gevp_convergence_all}
\end{figure}

\Cref{fig:gevp_convergence_all} shows that SOL and SOL-sym outperform the other methods. While both SOL and SOL-sym demonstrate fast local convergence, SOL achieves the best practical performance due to a tractable Newton equation~\eqref{eq:computable-tangent-system}. Regarding feasibility, all three infeasible methods eventually drive the feasibility violation to the same level as the feasible method IRNM.

\subsection{Independent component analysis}
\label{subsec:exp_ica}
We further evaluate our methods on independent component analysis (ICA), a fundamental task in blind source separation. The goal is to find an orthogonal unmixing matrix $X$ that maximizes the non-Gaussianity of the projected data. Following the classical log-cosh contrast function from FastICA~\cite{hyvarinenFastRobustFixedPoint1999}, we formulate the problem as
\[
\min_{X\in\mathrm{St}(d,d)} f(X) := -\frac{1}{N} \sum_{i=1}^N \sum_{j=1}^p \log\big(\cosh((W X)_{i,j})\big),
\]
where $W \in \mathbb R^{N\times d}$ is the whitened data matrix, $N$ is the number of samples, and $(WX)_{i,j}$ denotes the $(i,j)$-th entry of the matrix $WX$.

The experiment is conducted on real electroencephalography (EEG) data drawn from the MNE-Python sample dataset.\footnote{The real-data experiment is based on the public MNE-Python sample dataset, available at \url{https://mne.tools/stable/documentation/datasets.html}, specifically the raw file \texttt{MEG/sample/sample\_audvis\_raw.fif}.}
The raw EEG signals from the first $60$ seconds are high-pass filtered at $1$\,Hz, resampled to $100$\,Hz, centered, and subsequently whitened using the truncated SVD to obtain the input matrix $W \in \mathbb{R}^{N \times d}$. With the number of temporal samples $N=6000$ and $d=60$ retained principal components, the resulting optimization problem is defined on the manifold $\St(60,60)$. In this experiment, we broaden our comparison by including SQP alongside the previously evaluated solvers. Consistent with previous tests, all compared second-order methods are initialized from a warm-start point $X_0$ generated by the first-order landing method satisfying $\normF{\grad f(X_0)} \leq 10^{-3}$. To obtain better solutions, we set the stopping tolerance to $\texttt{tol}=10^{-13}$.

\begin{figure}[htbp]
    \centering
    \begin{subfigure}[t]{0.49\textwidth}
        \centering
        \includegraphics[width=\linewidth]{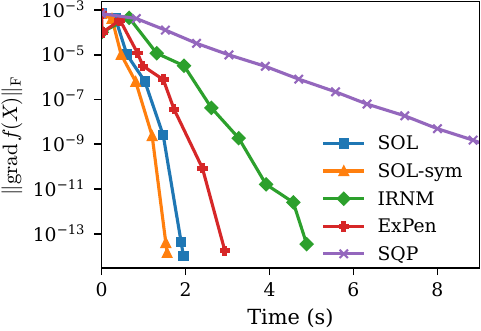}
        \label{fig:ica_tangent_time}
    \end{subfigure}
    \hfill 
    \begin{subfigure}[t]{0.49\textwidth}
        \centering
        \includegraphics[width=\linewidth]{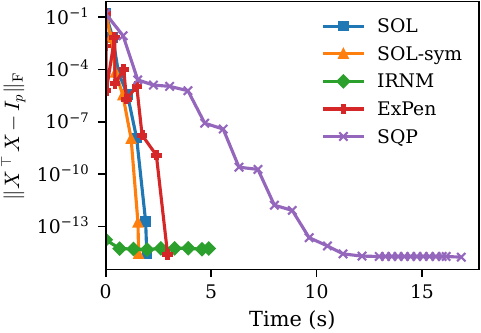}
        \label{fig:ica_ortho_time}
    \end{subfigure}

    \caption{Numerical comparison of different methods on independent component analysis with real EEG data. {Left:} gradient norm; {Right:} feasibility violation.}
    \label{fig:ica_convergence_all}
\end{figure}

\Cref{fig:ica_convergence_all} reports the numerical results on real-data ICA. Among the compared methods, the proposed SOL and SOL-sym methods exhibit the fastest convergence to a high-accuracy solution and perform better than the other methods. Unlike in high-dimensional scenarios, the performance of SOL-sym is comparable to SOL in this test since the computational cost of the Riemannian Hessian in~\eqref{eq:SOL-sym} is no longer dominant. The Riemannian method IRNM is able to preserve the feasibility of iterates, and the infeasible methods reduce the violation of feasibility to the same level, but without invoking retractions, thereby reducing computational time. Additionally, SQP is designed for general equality-constrained optimization and does not explicitly exploit the geometry of the Stiefel manifold, which leads to a higher computational cost.

\section{Conclusions and perspectives}
\label{sec:conclusion}
We have proposed a second-order landing framework for optimization on the Stiefel manifold, in which each update is decomposed into normal and tangent components. The normal component is chosen as the order-1 Newton-Schulz update. The main lesson is that such a normal step, added to a Riemannian Newton update, does not produce quadratic convergence; the Riemannian Newton equation must be modified to account for the normal-induced perturbation of the Riemannian gradient. This viewpoint leads to two second-order landing methods: SOL-sym, which retains the full Riemannian Hessian, and SOL, which further reduces the computational cost. Numerical results demonstrate that the proposed retraction-free methods not only enjoy fast local convergence but also achieve highly accurate orthogonality, which is often lacking in retraction-free first-order methods. In practice, we recommend SOL-sym for moderate-scale problems to retain full second-order information, and SOL for large-scale problems where computational efficiency is more critical.

This work opens several avenues for future research. An interesting direction is to globalize second-order landing methods. Moreover, extending the proposed framework to generalized Stiefel manifolds, as well as developing stochastic and preconditioned variants, is highly desirable for modern machine learning applications.

\bibliographystyle{siamplain}
\bibliography{references}
\end{document}